\def\le{\leqslant}
\def\ge{\geqslant}
\def\dd#1{{\,\mathrm{d}}#1}
\def\ve{\varepsilon}
\def\tr#1{\left\lfloor #1\right\rfloor}
\def\tf{\tilde{f}}
\def\tV{\tilde{V}}
\def\JS{\mathscr{J\!\!S}}
\newtheorem{thm}{Theorem}
\newtheorem{lmm}{Lemma}
\newtheorem{cor}{Corollary}
\newtheorem{prop}{Proposition}
\newtheorem{definition}{Definition}
\title{A binomial splitting process
in connection with corner parking problems}
\author{Michael Fuchs\thanks{Partially supported by NSC under the grant NSC-102-2115-M-009-002}\\
    Department of Applied Mathematics\\
    National Chiao Tung University\\
    Hsinchu, 300\\ Taiwan
\and Hsien-Kuei Hwang\thanks{A significant part of the work of this
author was done while visiting ISM (Institute of Statistical
Mathematics), Tokyo; he thanks ISM for its hospitality and support.}
\\
    Institute of Statistical Science \\
    Institute of Information Science\\
    Academia Sinica\\
    Taipei 115\\
    Taiwan
\and Yoshiaki Itoh\thanks{This author thanks the Institute of
Statistical Science, Academia Sinica, Taipei, for its
hospitality and support.}\\
    Institute of Statistical Mathematics\\
    10-3 Midori-cho, Tachikawa\\
    Tokyo 190-8562\\
    Japan
\and Hosam H. Mahmoud\\
    Department of Statistics\\
    The George Washington University\\
    Washington, D.C. 20052\\
    U.S.A.}
\date{\today}
\begin{document}
\maketitle

\begin{abstract}
A special type of binomial splitting process is studied. Such a
process can be used to model a high-dimensional corner parking
problem, as well as the depth of random PATRICIA tries (a special
class of digital tree data structures). The latter also has natural
interpretations in terms of distinct values in iid geometric random
variables and the occupancy problem in urn models. The corresponding
distribution is marked by logarithmic mean and bounded variance,
which is oscillating, if the binomial parameter $p$ is not equal to
$1/2$, and asymptotic to 1 in the unbiased case. Also, the limiting
distribution does not exist owing to periodic fluctuations.
\end{abstract}

\noindent \textbf{Key words.} Binomial distribution, parking problem,
periodic fluctuation, asymptotic approximation, digital trees,
de-Poissonization.

\medskip

\noindent \textbf{AMS Mathematics Subject Classification:} 60C05,
60F05, 68W40.

\paragraph{Introduction.}
We study in this paper the random variables $X_n$ defined
recursively by
\begin{align} \label{Xn-rr}
    X_n \stackrel{d}{=} X_{I_n} + 1, \qquad
    \mbox{for \ } n \ge 1,
\end{align}
with $X_0=0$, where $(X_n)$ and $(I_n)$ are independent and
\[
    \mathbb{P}(I_n=k) = \binom{n}{k}
    \frac{p^kq^{n-k}-p^k(q-p)^{n-k}}
    {1-q^n}, \qquad \mbox{for } k=0,\dots,n-1,
\]
where, throughout this paper, $0<p\le q:=1-p$. In particular,
\begin{align*}
    p=\tfrac12 & \Rightarrow
    \mathbb{P}(I_n=k)=\binom{n}{k}\frac1{2^n-1},\\
    p=\tfrac13 & \Rightarrow \mathbb{P}(I_n=k)
    =\binom{n}{k}\frac{2^{n-k}-1}{3^n-2^n},
\end{align*}
for $k=0,\dots, n-1$. Note that, for convenience we retain the case
$k=0$, but drop $k=n$.

The random variables $X_n$ originally arose from the analysis of a
special type of parking problem with ``corner preference''
(described below). They would also arise in a leader election
algorithm that advances a truncated binomial number of contestants
at each stage. Namely, $X_n$ is the number of rounds till the
election comes to an end; see~\cite{Louchard,KM13} for a broad
framework for these types of problems.

On the other hand, it turns out that, when $p=1/2$, the distribution
of $X_n$ is identical to that of two parameters in a random
symmetric PATRICIA trie: the depth (distance between a uniformly
chosen leaf node and the root) and the length of the ``left arm''
(the path that starts at the root and keeps going left, until no
more nodes can be found on the left); see for example
\cite{KP87,RJS93}. Also, the depth or the left arm of random
PATRICIA tries is identically distributed as the number of distinct
values in some random sequences (see \cite{AKP06}), and the number
of occupied urns in some urn models (see \cite{HJ08}).

We prove in this paper that the random variables $X_n$ have
logarithmic mean and bounded variance for large $n$. Also the
distributions do not approach a fixed limit law due to the inherent
fluctuations. For a similar context, see \cite{JLL08,JS97} and the
references therein; see also the recent paper \cite{KM13}.

\paragraph{A corner preference parking problem in discrete space.}
The parking problem has a long history in the discrete probability
literature, and is closely connected to many applications and models
in chemistry, physics, biology and computer algorithms; see
\cite{SI11,Evans93}. Most analytic results for the numerous variants
in the literature have to do with one-dimensional settings, and very
few deal with higher dimensions due to the intrinsic complexity of
the corresponding equations.

We first explain a simple discrete parking problem. Integral
translates of the cube $[0,\ell]^n$ are ``parked" into the
$n$-dimensional hypercube $[0,L]^n$, where $L>\ell\ge 1$. A precise
mathematical formulation of this is as follows: represent cubes by
their corner which has the shortest distance to the origin.
Moreover, set
\[
    Z_{L-\ell}^n :=\{\mathbf{a}=(a_1,\dots,a_n)
    \,:\, a_{j}=0,1,...,L-\ell ,
    \text{for } 1\le j \le n \},
\]
and define a distance $\rho(\mathbf{x}, \mathbf{y}) :=\max_{1 \le j
\le n} |x_j-y_j|$ between two points $\mathbf{x}, \mathbf{y} \in
Z_{L-\ell}^n$. At first, choose one point uniformly at random from
$Z_{L-\ell}^n$ and record it as $\mathbf{a}(1)$. Then choose another
point uniformly at random and record it as $\mathbf{a}(2)$ if $\ell
\le \rho(\mathbf{a}(1), \mathbf{a}(2))$; otherwise, reject it and
repeat the same procedure. If $\mathbf{a}(1), \mathbf{a}(2), \dots,
\mathbf{a}(k)$ are already recorded, choose the next point uniformly
at random and record it as $\mathbf{a}(k+1)$ if $\ell \le
\rho(\mathbf{a}(k+1), \mathbf{a}(j))$ for $j=1,2,\dots, k$;
otherwise, reject it and repeat the same procedure. We continue this
procedure until it is impossible to add more points among the
$(L-\ell +1)^{n}$ points. Since analytic development of this model
remains challenging, simulations have been carried out for finding
the jamming density of this model; see \cite{IS86,IU83}.

We further restrict the parking to be operated along one direction
only, which we call ``corner preference parking.'' More precisely,
let
\begin{align*}
    S(\mathbf{a})
    &:=\{ \mathbf{x} \,:\, \mathbf{x} \in Z_{L-\ell}^n
    , 0\le x_j \le a_j, j=1,\dots,n\},\\
    U(\mathbf{a}, \ell) &: =\{\mathbf{x} \,:\, \mathbf{x}
    \in Z_{L-\ell}^n, \rho(\mathbf{x},  \mathbf{a}) < \ell \},
\end{align*}
and $S_U (\mathbf{a}, \ell):=S(\mathbf{a})\setminus U(\mathbf{a},
\ell)$.

The corner preference parking problem then starts from
$S_U(\mathbf{a(1)}, \ell)$ with $\mathbf{a(1)}= (L-\ell,
L-\ell,...,L-\ell)\in Z_{L-\ell}^n$. After that, we place
sequentially at random the integral translates of cube $[0,\ell]^n$
into the cube $[0,L]^n$,  so that any car  placed  is closer to the
``corner" (origin) than the previously placed cubes, until  there is
no possible space to park. By ``closer to the corner'' we mean that
the coordinates of the point representing the car are all at most
as large as the car parked immediately before it. The process
continues till saturation.

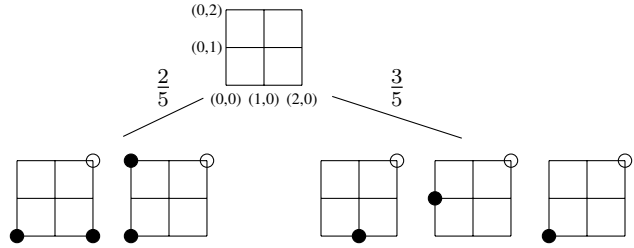
\begin{figure}[!ht]
\begin{center}
\begin{tikzpicture}[scale=1]
\node (a) at (-3.4,1.9) {};
\node (b) at (-4.75,1.25) {};
\draw (a) -- node[above] {$\frac{2}{5}$} (b);
\node (c) at (-2,1.9) {};
\node (d) at (0,1.25) {};
\draw (c) -- node[above] {$\frac{3}{5}$} (d);
\draw[fill] (-6,0) circle [radius=2.5pt] ;
\draw[fill] (-5,0) circle [radius=2.5pt] ;
\draw (-5,1) circle [radius=2.5pt] ;
\draw[fill] (-4.5,0) circle [radius=2.5pt] ;
\draw[fill] (-4.5,1) circle [radius=2.5pt] ;
\draw (-3.5,1) circle [radius=2.5pt] ;
\draw[fill] (-1.5,0) circle [radius=2.5pt] ;
\draw (-1,1) circle [radius=2.5pt] ;
\draw[fill] (-0.5,0.5) circle [radius=2.5pt] ;
\draw (0.5,1) circle [radius=2.5pt] ;
\draw[fill] (1,0) circle [radius=2.5pt] ;
\draw (2,1) circle [radius=2.5pt] ;

\draw[step=.5,black,thin,xshift=-0.25cm]
(-3,2-0.001) grid (-2,3); %top square
\node at (-3.25,1.8) {\tiny(0,0)};
\node at (-2.75,1.8) {\tiny(1,0)};
\node at (-2.25,1.8) {\tiny(2,0)};
\node at (-3.5,2.49) {\tiny(0,1)};
\node at (-3.5,2.99) {\tiny(0,2)};
\draw[step=.5,black,thin] (-6,0) grid (-5,1); %from left 1 square
\draw[step=.5,black,thin] (-4.5,0) grid (-3.5,1);%from left 2 square
\draw[step=.5,black,thin] (-2,0) grid (-1,1);%from left 3 square
\draw[step=0.5,black,thin] (-0.5,0) grid (0.5,1);%from left 4 square
\draw[step=0.5,black,thin]
(1-0.001,0-0.001) grid (2,1);%from left 5 square
\end{tikzpicture}
\end{center}
\caption{\emph{The five ($3^2-2^2$) different two-dimensional
configurations of corner preference parking, when $L=4$ and
$\ell=2$. In this case $\mathbb{E}(u^{X_2})= \frac35u+\frac25u^2$.}}
\label{fig-2d}
\end{figure}

Take now $L=2m$, and $\ell=m$, where $m\ge1$. Assume that all
possible ``parking positions" are equally likely at each stage. Let
the random variable $Y_n$ be the number of cars parked after the
first car at the time of saturation in such an $n$--dimensional
corner parking problem. The distribution of $Y_n$ can be explicitly
characterized.
\begin{lmm} The random variables $Y_n$ can be recursively enumerated
by
\begin{equation}\label{lcp-rr}
    \mathbb{E}(u^{Y_n}) = u \sum_{1\le k\le n}
    \binom{n}{k}\frac{m^k-(m-1)^k}{(m+1)^n-m^n}\,
    \mathbb{E}\bigl(u^{Y_{n-k}}\bigr),
    \qquad \mbox{for \ } n\ge1,
\end{equation}
with $Y_0=0$.
\end{lmm}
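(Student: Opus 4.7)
The plan is to condition on the position $\mathbf{a}(2)$ of the second car. Since the first car sits at $\mathbf{a}(1)=(m,\dots,m)$ and $Z_{L-\ell}^{n}=\{0,\dots,m\}^{n}$, a point $\mathbf{b}\in Z_{L-\ell}^{n}$ is a valid position for $\mathbf{a}(2)$ precisely when $\rho(\mathbf{b},\mathbf{a}(1))\ge m$, which (since $0\le b_j\le m$) is equivalent to requiring $b_j=0$ for at least one index $j$. There are $(m+1)^{n}-m^{n}$ such points, and the uniform hypothesis selects one of them at random. The factor $u$ on the right-hand side of~\eqref{lcp-rr} will record the placement of $\mathbf{a}(2)$.

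The key step is a decoupling claim. Set $B:=\{j:b_j=m\}$ and $r:=|B|$; I claim that, conditionally on $\mathbf{a}(2)=\mathbf{b}$, the number of subsequently parked cars has the same distribution as $Y_r$. Indeed, for any later car $\mathbf{x}$ and any $j\notin B$ we have $x_j\le b_j\le m-1$, so coordinate $j$ contributes at most $m-1$ to any max-distance between two later cars; hence a $\rho\ge m$ constraint can only ever be witnessed by some coordinate in $B$. In particular $\rho(\mathbf{x},\mathbf{b})\ge m$ reduces to ``$x_j=0$ for some $j\in B$'' (which automatically implies $\rho(\mathbf{x},\mathbf{a}(1))\ge m$), and analogous reductions hold at every later step. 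Because all constraints split into a non-$B$ part (only box constraints) and a $B$ part, the uniform distribution on valid positions at each step factors as a product: the non-$B$ coordinates have no bearing on whether parking can continue, while the $B$-coordinates obey exactly the law of the corresponding step in a corner preference parking problem of dimension $r$ started from $(m,\dots,m)\in\{0,\dots,m\}^{r}$. By induction on the number of cars already parked, the $B$-projections of cars $2,3,\dots$ therefore trace out a dim-$r$ corner parking, saturating at the same time as the original process; this accounts for exactly $Y_r$ cars beyond $\mathbf{a}(2)$.

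Finally, I count the valid $\mathbf{b}$'s with $|B|=r$. Choose the set $B$ in $\binom{n}{r}$ ways (forcing $b_j=m$ for $j\in B$); the remaining $n-r$ coordinates lie in $\{0,\dots,m-1\}$ and must include at least one zero, contributing $m^{n-r}-(m-1)^{n-r}$ configurations. The totals $\binom{n}{r}\bigl(m^{n-r}-(m-1)^{n-r}\bigr)$ sum over $0\le r\le n-1$ to $(m+1)^{n}-m^{n}$ by the binomial theorem applied to $(1+m)^{n}-\bigl(1+(m-1)\bigr)^{n}$, confirming normalisation. Assembling the conditional generating function yields
\[
\mathbb{E}(u^{Y_n})=u\sum_{r=0}^{n-1}\binom{n}{r}\frac{m^{n-r}-(m-1)^{n-r}}{(m+1)^{n}-m^{n}}\,\mathbb{E}(u^{Y_r}),
\]
and the substitution $k=n-r$ delivers~\eqref{lcp-rr}. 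The main technical obstacle is justifying the decoupling step: one has to verify inductively at every subsequent car placement that the notion of ``valid position'' splits cleanly into an inconsequential non-$B$ part and a $B$-part governed by the dim-$r$ corner parking law, so that the non-$B$ coordinates, which only shrink along the process, never create or destroy valid positions for the continuation.
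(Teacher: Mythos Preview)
Your proof is correct and follows essentially the same approach as the paper: both condition on the position of the second car, partition according to the set of coordinates of $\mathbf{a}(2)$ that equal $m$ (your $r=|B|$ is the paper's $n-k$), count those configurations as $\binom{n}{r}\bigl(m^{n-r}-(m-1)^{n-r}\bigr)$, and then reduce the continuation to a lower-dimensional corner parking problem. The paper simply asserts that ``the problem is reduced to that of an $(n-k)$-dimensional one'' after placing $\mathbf{a}(2)$, whereas you spell out the decoupling argument (that coordinates outside $B$ can never witness a $\rho\ge m$ constraint, so the uniform distribution on valid positions factors and the $B$-projection evolves as an independent dim-$r$ process); this is a welcome elaboration, not a different route.
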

This corresponds to \eqref{Xn-rr} with $p=1/(m+1)$.
\begin{proof}
There are $(m+1)^n$ possible positions of integral translates of
hypercubes (cars) $[0,m]^n$ to park in the hypercube $[0,2m]^n$.
After parking the first car at the top right corner, the number of
possible positions for the second car $\mathbf{a}(2)$ equals
$(m+1)^n-m^n$, which is the denominator in \eqref{lcp-rr}.

Suppose that $n-k$ coordinates of $\mathbf{a}(2)$ assume the value
$m$, and each of the remaining $k$ coordinates may assume any of
the values $\{0,1,\ldots,m-1\}$. Observe that at least one of the
$k$ coordinates should be $0$ for the second car to park without
overlapping with the first car. Since the number of cases with each
of the $k$ coordinates taking a value in $\{1, 2,\ldots , m-1\}$
is $(m-1)^k$, the number of all possible positions for the second
car (under the mentioned restriction) is $m^k-(m-1)^{k}$. We have
${n \choose k}$ choices of the $k$ coordinates. Thus,
under this restriction, the second car has a total of
${n \choose k}(m^k-(m-1)^{k})$ possible positions to park. After
this placement, the problem is reduced to that of an
$(n-k)$-dimensional one; see Figure~\ref{fig-2d} for an illustration
with $L=4$ and $\ell=2$.
\end{proof}

\paragraph{Depths of PATRICIA tries.} Tries (a mixture of tree and
re\emph{trie}val) are one of the most useful tree structures in
storing alphabetical or digital data in computer algorithms, the
underlying construction principle of which being simply ``0-bit
directing to the left" and ``1-bit directing to the right".
PATRICIA\footnote{``PATRICIA" is an acronym, which stands for
``Practical Algorithm To Retrieve Information Coded In
Alphanumeric".} tries are a variant of tries where all nodes with
only a single child are compressed; see Figure~\ref{fg-trie} for a
plot of tries and PATRICIA tries and the book \cite{Mahmoud92} for
more information. Note that, unlike tries whose number of internal
nodes is not necessarily a constant, a PATRICIA trie of $n$ keys has
always $n-1$ internal nodes for branching purposes, a standard
property of a tree.

To study the shapes of random PATRICIA tries, we assume that the
input is a sequence of $n$ independent and identically distributed
random variables, each composed of an infinite sequence of Bernoulli
random variables with mean $p$, $0<p<1$. Under such a Bernoulli
model, we construct random PATRICIA tries and the shape parameters
become random variables.

Consider the depth $Z_n$ of a random PATRICIA trie of $n$ keys under
the Bernoulli model, where the depth denotes the distance between
the root and a randomly chosen key (from the leaves where keys are
stored), where the $n$ keys are equally likely to be selected. Then
we have the recurrence relation for the probability generating
function of $Z_n$
\[
    \mathbb{E}(u^{Z_n}) = u\sum_{1\le k<n}
    \frac{\binom{n}{k}p^k q^{n-k}}{1-p^n-q^n}
    \left(\frac{k}{n}\,\mathbb{E}(u^{Z_k})
    + \frac{n-k}{n}\,\mathbb{E}(u^{Z_{n-k}})
    \right), \qquad \mbox{for \ }n\ge2,
\]
with $Z_0=Z_1=0$. In the unbiased case $p=1/2$, this reduces to
\[
    \mathbb{E}(u^{Z_n}) = u\sum_{0\le k\le n-2}
    \frac{\binom{n-1}{k}}{2^{n-1}-1}
    \,\mathbb{E}(u^{Z_{k+1}}),
\]
which implies that
\[
    Z_{n+1} \stackrel{d}{\equiv} X_n, \qquad
    \mbox{for\ } n\ge1; \quad p=\frac 1 2.
\]

\begin{figure}
\begin{center}
\begin{tikzpicture}[line width=1pt,scale=0.65]
\tikzstyle{every node}=[draw ,circle, level distance=0.2cm]
\tikzstyle{level 1}=[sibling distance=7cm]
\tikzstyle{level 2}=[sibling distance=3cm]
\node {}
    child{ node{}
        child{node[rectangle]{\small $00$}
        edge from parent node[draw=none, left] {\small $0$}}
        child{node{}
            child{ edge from parent [draw=none]}
            child{node{}
                child{node{}
                    child{node[rectangle]{\small $01100$}
                    edge from parent node[draw=none, left] {\small $0$}}
                    child{node[rectangle]{\small $01101$}
                    edge from parent node[draw=none, right] {\small $1$}}
                edge from parent node[draw=none, left] {\small $0$}}
                child{edge from parent[draw=none]}
            edge from parent node[draw=none, right] {\small $1$}}
        edge from parent node[draw=none, right] {\small $1$}}
    edge from parent node[draw=none, left] {\small $0$}}
    child{ node{}
        child{node{}
            child{
            edge from parent[draw=none]}
            child{node{}
                child{node[rectangle]{\small $1010$}
                edge from parent node[draw=none, left] {\small $0$}}
                child{node[rectangle]{\small $1011$}
                edge from parent node[draw=none, right] {\small $1$}}
            edge from parent node[draw=none, right] {\small $1$}}
        edge from parent node[draw=none, left] {\small $0$}}
        child{
        edge from parent[draw=none]}
    edge from parent node[draw=none, right] {$1$}};
\end{tikzpicture}
\begin{tikzpicture}[line width=1pt,scale=0.65,baseline=-4.5cm]
\tikzstyle{every node}=[draw ,circle, level distance=0.2cm]
\tikzstyle{level 1}=[sibling distance=7cm]
\tikzstyle{level 2}=[sibling distance=3cm]
\node {}
    child{ node{}
        child{node[rectangle]{$00$}
        edge from parent node[draw=none, left] {\small $0$}}
        child{node{\small $10$}
            child{node[rectangle]{\small $01100$}
            edge from parent node[draw=none, left] {\small $0$}}
            child{node[rectangle]{\small $01101$}
            edge from parent node[draw=none, right] {\small $1$}}
        edge from parent node[draw=none, right] {\small $1$}}
    edge from parent node[draw=none, left] {\small $0$}}
    child{node{\small $01$}
        child{node[rectangle]{\small $1010$}
        edge from parent node[draw=none, left] {\small $0$}}
        child{node[rectangle]{\small $1011$}
        edge from parent node[draw=none, right] {\small $1$}}
    edge from parent node[draw=none, right] {\small $1$}};
\end{tikzpicture}
\caption{\emph{A trie (left) of $n=5$ records, and the corresponding
PATRICIA tries (right): the circles represent internal nodes and
rectangles holding the records are external nodes. The compressed
bits are also indicated on the nodes.}} \label{fg-trie}
\end{center}
\end{figure}
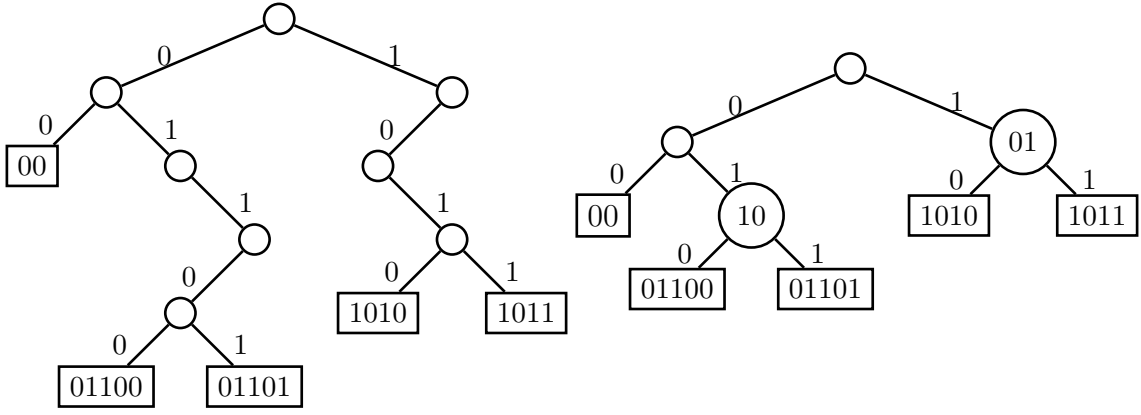

Another identically distributed random variable is the length $W_n$
of the ``left arm", which is the path starting from the root and
going always to the left until reaching a key-node. Then, under the
Bernoulli model,
\begin{align}\label{Wn}
    \mathbb{E}(u^{W_n}) = u\sum_{1\le k\le n}
    \frac{\binom{n}{k}p^k q^{n-k}}{1-q^n}\,
    \mathbb{E}(u^{W_{n-k}}),
    \qquad \mbox{for\ } n\ge2,
\end{align}
with $W_0=0$ and $W_1=1$. It is obvious that $W_n
\stackrel{d}{\equiv} X_n$, when $p=1/2$.

\paragraph{Distinct values and urn models.} The left arm $W_n$ has
yet two other different interpretations. One is in terms of the
number of distinct letters in a sequence of independent and
identically distributed geometric random variables with success
probability $p$ for which one has exactly the recurrence \eqref{Wn};
see \cite{AKP06}. Alternatively, if we consider the urn model where
the $j$th urn has probability of $pq^j$ of receiving a ball, then
the number of occupied urns also follows the same distribution; see
\cite{HJ08}.

\paragraph{Exponential generating functions.} The exponential
generating function for the moment generating function of $X_n$,
\[
    P(z,y) := \sum_{n\ge0} \frac{\mathbb{E}(e^{X_n y})}{n!}\, z^n,
\]
satisfies, by \eqref{Xn-rr}, the functional equation
\[
    P(z,y) = e^y (e^{qz}-e^{(q-p)z})P(pz,y)+P(qz,y).
\]
It follows that the exponential generating function of the mean
$f_1(z) := \sum_{n\ge0} \mathbb{E} (X_n)z^n/n!$ satisfies
\[
    f_1(z) = (e^{qz}-e^{(q-p)z})f_1(pz)+f_1(qz)+e^z-e^{qz},
\]
with $f_1(0)=0$. By iteration, we obtain
\begin{align*}
    f_1(z) &= \sum_{\substack{k\ge0\\ 0\le j\le k}}
    \left(e^{p^{k-j}q^jz}-e^{p^{k-j}q^{j+1}z}\right)\\
    &\qquad\qquad\times\sum_{0\le i_1\le\cdots\le i_{k-j}\le j}
    \prod_{0\le \ell <k-j}  \left(e^{p^{\ell}q^{i_{\ell+1}+1}z}
    -e^{(q-p)p^{\ell}q^{i_{\ell+1}}z}\right) ,
\end{align*}
which does not seem useful for further manipulation.

\paragraph{Poisson generating functions.} For our asymptotic
purposes, it is technically more convenient to consider the Poisson
generating function,
\[
    \tilde{P}(z,y) := e^{-z}P(z,y),
\]
which then satisfies the equation
\begin{equation}\label{bivar-gf}
    \tilde{P}(z,y) =
    e^y (1-e^{-pz})\tilde{P}(pz,y)+e^{-pz}\tilde{P}(qz,y).
\end{equation}

It follows that the Poisson generating function for the $m$th moment
\[
    \tilde{f}_m(z)
    := e^{-z}\sum_{n\ge0}\frac{\mathbb{E}(X_n^m)}{n!}\, z^n,
\]
satisfies the equation
\[
    \tilde{f}_m(z) =
    (1-e^{-pz})\sum_{0\le \ell \le m}
    \binom{m}{\ell} \tilde{f}_\ell(pz)
    +e^{-pz}\tilde{f}_m(qz), \qquad \mbox{for \ } m\ge0,
\]
where $\tilde{f}_0(z) = 1$.

In particular, we have
\begin{align}
    \tilde{f}_1(z) &=
    (1-e^{-pz})\tilde{f}_1(pz)+e^{-pz}\tilde{f}_1(qz)
    + 1-e^{-pz}, \label{tM1z}\\
    \tilde{f}_2(z) &=
    (1-e^{-pz})\tilde{f}_2(pz)+ e^{-pz}\tilde{f}_2(qz)
    + 2(1-e^{-pz})\tilde{f}_1(pz)+
    1-e^{-pz}. \label{tM2z}
\end{align}

\paragraph{Expected value of $X_n$.}
Let
\begin{align}\label{phi-z}
    \phi(z) = e^{-z}\left(\tilde{f}_1\left(p^{-1} qz\right)
    -\tilde{f}_1(z)\right),
\end{align}
and let $\phi^*(s)$ denote its Mellin transform (see \cite{FGD95})
\begin{align} \label{phi-s}
    \phi^*(s) := \int_0^\infty
    e^{-t}t^{s-1} \left(\tilde{f}_1\left(p^{-1} qt\right)
    -\tilde{f}_1(t)\right) \dd t,
\end{align}
which is well-defined in the half-plane $\Re(s)>-1$ (see Appendix
for growth properties of $\tilde{f}_1$).

\begin{thm} The expected value of $X_n$ satisfies
\label{thm:mun}
\begin{align*}
    \mathbb{E}(X_n)
    &= \log_{1/p} n +\frac{\gamma+\phi^*(0)}{\log(1/p)}-
    \frac12 + Q(\log_{1/p} n) + O(n^{-1}).
\end{align*}
Here $\gamma$ denotes Euler's constant and
\begin{align} \label{C:Qu}
    Q(u) := \sum_{k\in\mathbb{Z}\setminus\{0\}}
    Q_k e^{-2k\pi i u}, \qquad
    Q_k := -\frac{\Gamma(\chi_k)-
    \phi^*(\chi_k)}{\log(1/p)},
\end{align}
where $\chi_k := 2k\pi i/\log(1/p)$, and $\Gamma$ denotes the
Gamma function.
\end{thm}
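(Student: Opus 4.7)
The plan is to reduce \eqref{tM1z} to a harmonic-sum representation of $\tilde{f}_1$, extract the asymptotic expansion by Mellin-transform/residue analysis, and finish by analytic de-Poissonization.

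Grouping the two occurrences of $\tilde{f}_1(pz)$ in \eqref{tM1z} yields
\[
  \tilde{f}_1(z) - \tilde{f}_1(pz) = e^{-pz}\bigl(\tilde{f}_1(qz)-\tilde{f}_1(pz)\bigr) + 1 - e^{-pz}.
\]
Substituting $pz$ for $z$ in the definition \eqref{phi-z} of $\phi$ identifies the first term on the right with $\phi(pz)$, leaving the clean recurrence $\tilde{f}_1(z) = \tilde{f}_1(pz) + \phi(pz) + 1 - e^{-pz}$. Iterating this identity and using $\tilde{f}_1(p^Nz)\to\tilde{f}_1(0)=0$ produces
\[
  \tilde{f}_1(z) = \sum_{k\ge 1}\Bigl[\phi(p^k z) + 1 - e^{-p^k z}\Bigr].
\]

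Applying the Mellin transform term-by-term, with $\int_0^\infty(1-e^{-t})t^{s-1}\dd t = -\Gamma(s)$ and $\sum_{k\ge 1}p^{-ks}=1/(p^s-1)$ in the common strip $-1<\Re(s)<0$, I obtain
\[
  \tilde{f}_1^*(s) = \frac{\phi^*(s)-\Gamma(s)}{p^s-1}, \qquad -1<\Re(s)<0.
\]
Mellin inversion followed by a rightward contour shift past $\Re(s)=0$ to some $c'\in(0,1)$ then picks up a double pole at $s=0$, contributed jointly by $\Gamma(s)\sim 1/s$ and $p^s-1\sim -s\log(1/p)$, together with the simple poles at $s=\chi_k=2k\pi i/\log(1/p)$, $k\ne 0$, coming from the remaining zeros of $p^s-1$. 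A direct Laurent expansion at $s=0$ using $\Gamma(s)=1/s-\gamma+O(s)$ and $p^s-1=-s\log(1/p)+\tfrac12 s^2(\log(1/p))^2+O(s^3)$ produces the polynomial part $\log_{1/p}z+(\gamma+\phi^*(0))/\log(1/p)-\tfrac12$, while each simple pole at $\chi_k$ contributes $Q_k z^{-\chi_k}=Q_k e^{-2k\pi i\log_{1/p}z}$ with $Q_k$ as in \eqref{C:Qu}; summing over $k$ delivers the periodic function $Q(\log_{1/p}z)$.

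It remains to control the shifted-contour remainder integral and to transfer the expansion from $\tilde{f}_1(n)$ to $\mathbb{E}(X_n)$. The former uses the exponential decay of $\Gamma$ on vertical lines together with polynomial growth bounds on $\phi^*$ inferred from \eqref{phi-s} and the appendix estimates for $\tilde{f}_1$. The latter is an analytic de-Poissonization of Jacquet--Szpankowski type, valid because $\tilde{f}_1$ is of logarithmic type in a cone about the positive real axis, delivering $\mathbb{E}(X_n)=\tilde{f}_1(n)+O(n^{-1})$. The main obstacle is the technical verification of both the vertical-line bounds on $\phi^*$ and the de-Poissonization hypotheses; these are standard but not automatic, and ultimately reduce to the complex-analytic growth estimates for $\tilde{f}_1$ promised in the appendix.
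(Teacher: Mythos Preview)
Your approach is essentially the same as the paper's: rewrite \eqref{tM1z} as $\tilde f_1(z)=\tilde f_1(pz)+\phi(pz)+1-e^{-pz}$, take the Mellin transform to get $\tilde f_1^\star(s)=(\Gamma(s)-\phi^*(s))/(1-p^s)$, shift the inversion contour to the right to collect the residues at $s=0$ and $s=\chi_k$, and finish by analytic de-Poissonization. The explicit iteration to a harmonic sum is a cosmetic variant; the paper simply applies the Mellin transform to the functional equation directly.

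One point needs tightening. You write that the shifted integral is controlled by ``the exponential decay of $\Gamma$ on vertical lines together with polynomial growth bounds on $\phi^*$.'' Polynomial growth of $\phi^*$ is not enough: the term $\phi^*(s)/(1-p^s)\,z^{-s}$ would then fail to be integrable on the vertical line, and the contour shift could not be justified. What is actually needed (and what the paper proves in its Lemma~2 via the Exponential Smallness Lemma, using that $\tilde f_1$ is entire and of logarithmic growth in the sector $\mathscr S_\ve$) is that $\phi^*(\sigma\pm it)=O(e^{-(\pi/2-\ve)|t|})$, i.e., $\phi^*$ itself decays exponentially on vertical lines. With that in hand, everything you wrote goes through and yields the $O(n^{-1})$ error exactly as in the paper.
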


The asymptotic expansion simplifies when $p=1/2$; indeed, in this
case, we have the closed-form expression
\[
    \tilde{f}_1(z) = \sum_{k\ge1}\left(1-e^{-z/2^k}\right),
    \qquad\mbox{for \ } \Re(z)>0.
\]
\begin{cor} In the symmetric case when $p=1/2$, the expected value
of $X_n$ satisfies asymptotically
\[
    \mathbb{E}(X_n) =
    \log_2n +\frac{\gamma}{\log 2}-\frac12 -\frac1{\log 2}
    \sum_{k\ne0} \Gamma(\chi_k) n^{-\chi_k} + O\left(n^{-1}\right),
\]
where $\chi_k = 2k\pi i/\log 2$.
\end{cor}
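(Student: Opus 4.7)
The plan is to show that when $p=1/2$ the auxiliary function $\phi(z)$ defined in \eqref{phi-z} vanishes identically, so that $\phi^*(s)\equiv 0$ and the general expansion of Theorem~\ref{thm:mun} collapses to the asserted closed form. The proof has two ingredients: verifying the displayed series representation of $\tilde{f}_1$, and then reading off its consequence for $\phi$.

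For the first ingredient, specializing \eqref{tM1z} to $p=q=1/2$ makes the two terms $(1-e^{-z/2})\tilde{f}_1(z/2)$ and $e^{-z/2}\tilde{f}_1(z/2)$ combine into a single $\tilde{f}_1(z/2)$, so the functional equation reduces to
\[
    \tilde{f}_1(z) = \tilde{f}_1(z/2)+1-e^{-z/2}.
\]
Iterating this $N$ times yields
\[
    \tilde{f}_1(z) = \tilde{f}_1(z/2^N)+\sum_{1\le k\le N}\bigl(1-e^{-z/2^k}\bigr).
\]
Since $\tilde{f}_1$ is the Poisson generating function of the non-negative, at most logarithmically growing sequence $\mathbb{E}(X_n)$, it is entire with $\tilde{f}_1(0)=0$; thus $\tilde{f}_1(z/2^N)\to 0$ as $N\to\infty$ for every fixed $z$. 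The summand satisfies $1-e^{-z/2^k}=O(2^{-k}|z|)$, so the infinite series converges uniformly on compact subsets of $\Re(z)>0$. Passing to the limit gives the closed form for $\tilde{f}_1(z)$.

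For the second ingredient, $p=1/2$ forces $p^{-1}q=1$, so definition \eqref{phi-z} reads
\[
    \phi(z) = e^{-z}\bigl(\tilde{f}_1(z)-\tilde{f}_1(z)\bigr) = 0,
\]
whence \eqref{phi-s} gives $\phi^*(s)\equiv 0$. Substituting $\phi^*(0)=0$ and $\phi^*(\chi_k)=0$ into Theorem~\ref{thm:mun} together with $\log(1/p)=\log 2$ reduces the constant to $\gamma/\log 2-1/2$ and the Fourier coefficients \eqref{C:Qu} to $Q_k=-\Gamma(\chi_k)/\log 2$. Using the identification $e^{-2k\pi i\log_2 n}=n^{-\chi_k}$ in the series $Q(\log_2 n)$ produces exactly the oscillating sum in the corollary.

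The only technical care required is in justifying the passage to the limit in the iteration, which relies on the entire character of $\tilde{f}_1$ and the polynomial growth bounds recorded in the Appendix. No substantial obstacle is anticipated: the symmetry $p=q$ cleanly trivializes the difference structure built into $\phi$, and the rest is bookkeeping in Theorem~\ref{thm:mun}.
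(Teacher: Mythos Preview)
Your proposal is correct and matches the paper's implicit argument: the corollary is obtained from Theorem~\ref{thm:mun} by observing that $p^{-1}q=1$ forces $\phi\equiv0$, hence $\phi^*\equiv0$, which collapses the constant to $\gamma/\log 2-\tfrac12$ and the Fourier coefficients to $Q_k=-\Gamma(\chi_k)/\log 2$. Your first ingredient (the series formula for $\tilde f_1$) is the side remark the paper displays but is not actually needed to deduce the corollary---your second ingredient alone suffices.
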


For numerical purposes, the value of $\phi^*(\chi_k)$ can be
computed by the series expression
\[
    \phi^*(\chi_k) = \sum_{j\ge1}\frac{\mu_j}{j!}\,
    \Gamma(\chi_k+j) \left(q^j - 2^{-j-\chi_k}\right),\qquad
    \mbox{for \ } k=0,1,\dots \ .
\]

Approximate plots of the periodic function $Q(u)$ for $p=1/3$ based
on exact values of $\mu_n:={\mathbb E}(X_n)$ and on its Fourier series
are given in Figure~\ref{C:fig}.
\begin{figure}
\begin{center}
\includegraphics[width=7cm]{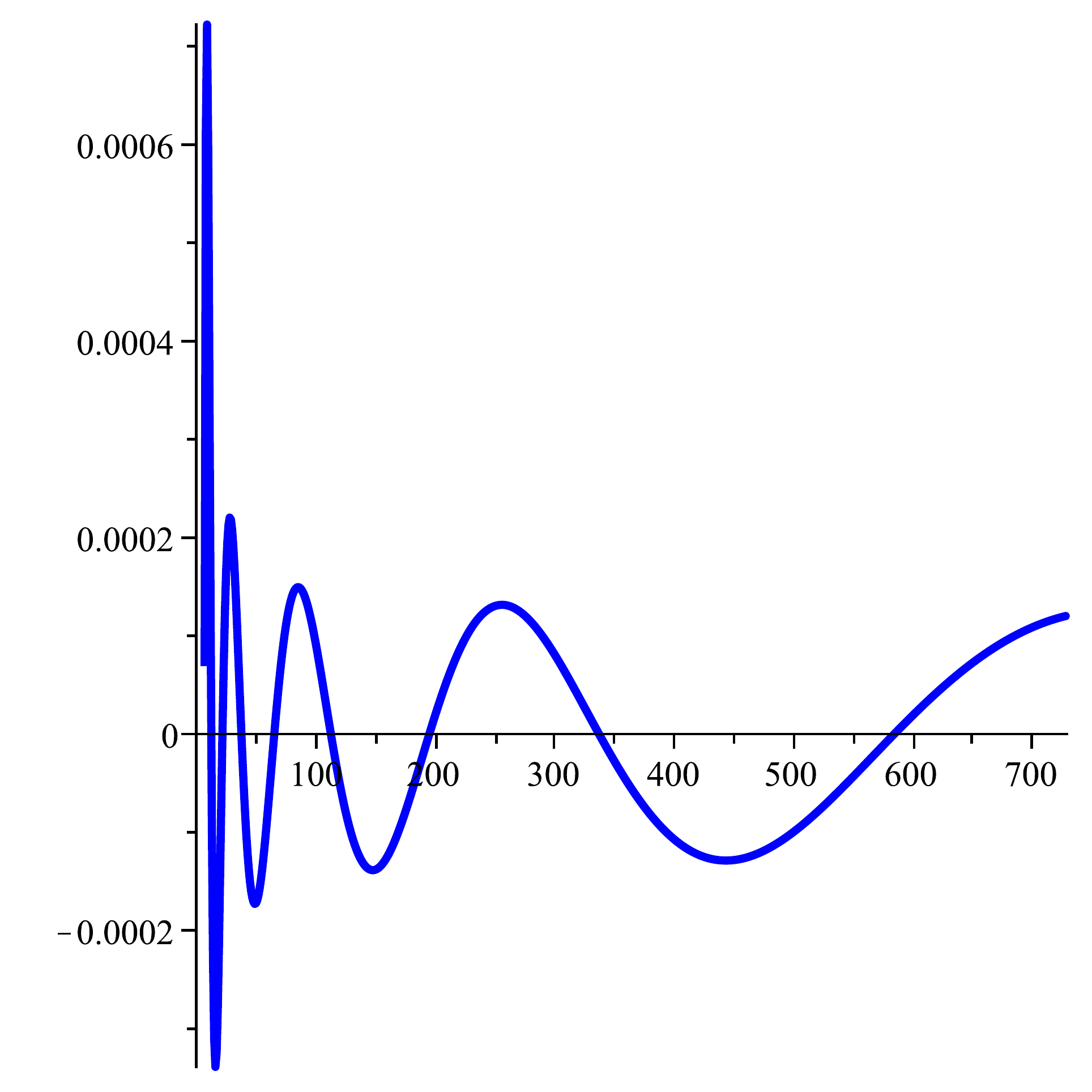}
\includegraphics[width=6.5cm]{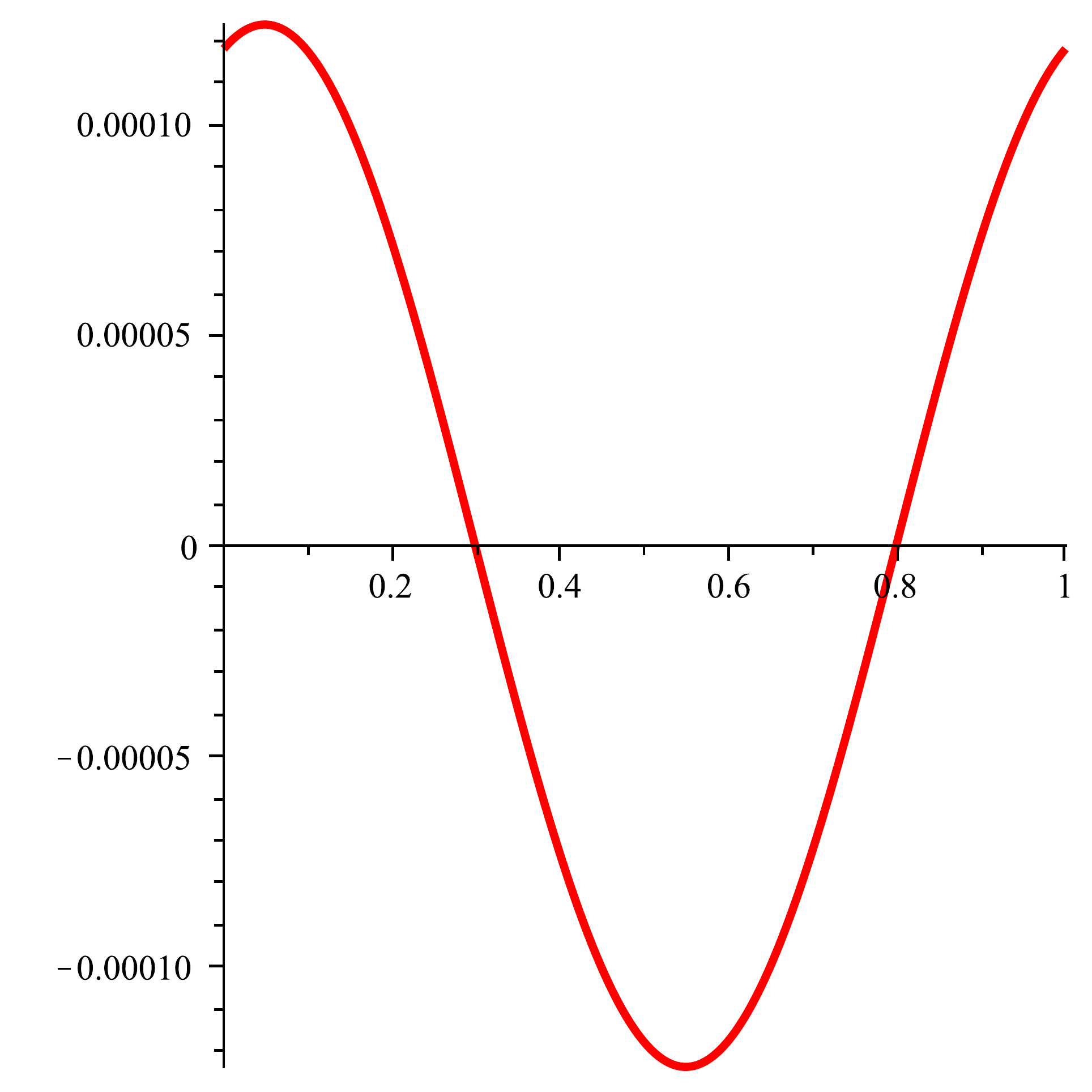}
\end{center}
\caption{\em $p=1/3$: Fluctuation of the periodic function $Q(\log_3
n)$, as approximated by $\mu_n - H_n/\log 3+1/2-\phi^*(0)/\log 3$
(left) and the first five terms of the Fourier series \eqref{C:Qu}
(right).} \label{C:fig}
\end{figure}

\paragraph{Outline of proof.} Theorem~\ref{thm:mun} is proved by
a two-stage, purely analytic approach based on Mellin transform and
analytic de-Poissonization (see \cite{FS09,JS98}). We outline the
major steps and arguments used here, leaving the major technical
justification in the Appendix.

Our starting point is the functional equation~\eqref{tM1z}, which is
rewritten as
\[
    \tilde{f}_1(z) = \tilde{f}_1(pz) + \phi(pz) + 1-e^{-pz},
\]
where $\phi(z)$ is defined in \eqref{phi-z}. While $\phi$ involves
itself $\tilde{f}$, we show that it is exponentially small for large
complex parameter, and thus the asymptotics of $\tilde{f}_1(z)$ can
be readily derived by standard inverse Mellin transform arguments
(growth order of the integrand at infinity and calculus of
residues).

Once the asymptotics of $\tilde{f}_1(z)$ for large $|z|$ is known,
we can apply the Cauchy integral formula
\[
    \mathbb{E}(X_n) = \frac{n!}{2\pi i}
    \oint_{|z|=n} z^{-n-1} e^z \tilde{f}_1(z) \dd z,
\]
and the saddle-point method to derive the asymptotics of the mean.
Roughly, the growth order of $\tilde{f}_1$ is small, meaning that
the saddle-point (where the derivative of the integrand becomes
zero) lies near $n$. The specialization of the saddle-point method
here (with integration contour $|z|=n$) has many interesting
properties and is often referred to as the analytic
de-Poissonization (see the survey paper by Jacquet and Szpankowski
\cite{JS98}).

It turns out that such a Mellin and de-Poissonization process can be
manipulated in a rather systematic and operational manner by
introducing the notion of JS-admissible functions in which we
combine ideas from \cite{JS98} and \cite{Hayman56} (see also
\cite{FHV13,HFV10}). So we can easily apply the same approach to
characterize the asymptotics of the variance and the limiting
distribution.

\paragraph{Mellin transform.} Let
\begin{align} \label{scrS}
    \mathscr{S}_\ve := \{z\, :\,  |\arg(z)|\le \pi/2-\ve\},
    \qquad \mbox{for \ } \ve>0.
\end{align}
By Proposition~\ref{prop-tr} (in Appendix), $\tilde{f}_1(z)$ is
polynomially bounded for large $|z|$ in the sector
$\mathscr{S}_\ve$. This means that $\phi(z) + 1-e^{-pz} =O(1)$ for
$|z|\ge 1$ in $\mathscr{S}_\ve$. Consequently, $\tilde{f}_1(z) =
O(|\log z|)$ in the same range of $z$. On the other hand, since
$\tf_1(z) \sim z$, as $z\to0$, we see that the Mellin transform
\[
    \tf_1^\star(s) := \int_0^\infty \tf_1(z) z^{s-1} \dd z,
\]
exists in the strip $-1<\Re(s)<0$, and defines an analytic function
there.

It follows from \eqref{tM1z} that
\[
    \tf_1^\star(s) =
    \frac{\Gamma(s)-\phi^*(s)}{1-p^s},
    \qquad \mbox{for \ } -1<\Re(s)<0,
\]
and $\phi^*$ is defined in \eqref{phi-s}.

By the Mellin inversion formula,
\[
    \tf_1(z) = \frac1{2\pi i}
    \int_{-1/2-i\infty}^{-1/2+i\infty}
    \frac{\Gamma(s)-\phi^*(s)}{1-p^s}\,z^{-s} \dd s.
\]
We need the growth property of $\phi^*(\sigma\pm it)$ for
large $|t|$.

\begin{lmm} For $\sigma>-1$,
\[
    |\phi^*(\sigma\pm it)| = O(e^{-(\pi/2-\ve)|t|}),
\]
as $|t|\to\infty$.
\end{lmm}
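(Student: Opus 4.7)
The idea is the standard one for bounds on Mellin transforms: rotate the contour of integration away from the positive real axis into the sector of analyticity of the integrand, so that the factor $t^{s-1}$ contributes the desired exponential decay in $|\Im(s)|$.

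Write $\phi^*(s) = \int_0^{\infty} e^{-t} t^{s-1} g(t)\dd t$ with $g(t):=\tilde f_1(p^{-1}qt)-\tilde f_1(t)$. The function $g$ extends analytically to the sector $\mathscr{S}_\ve$ (since $\tilde f_1$ does) and, by Proposition~\ref{prop-tr} in the Appendix, grows at most polynomially there; near $z=0$ one has $\tilde f_1(z)\sim z$, so $g(z)=O(|z|)$. Fix $\alpha := \pi/2-\ve$ and consider the sector $\{re^{i\theta}:r\ge 0,\ 0\le\theta\le\alpha\}$, in which $e^{-z}g(z)z^{s-1}$ is analytic for $\Re(s)>-1$. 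Apply Cauchy's theorem on the pie-slice contour with boundary pieces $[\delta,R]$, the arc $\{Re^{i\theta}:0\le\theta\le\alpha\}$, the ray $\{re^{i\alpha}:\delta\le r\le R\}$, and the arc $\{\delta e^{i\theta}:0\le\theta\le\alpha\}$.

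On the outer arc, $|e^{-Re^{i\theta}}|=e^{-R\cos\theta}\le e^{-R\sin\ve}$, which combined with the polynomial bound on $g$ forces the contribution to vanish as $R\to\infty$; on the inner arc, the bound $g(z)=O(|z|)$ and $\Re(s)>-1$ force the contribution to vanish as $\delta\to 0$. Hence, for $t>0$, one may rotate to the ray of angle $\alpha$, obtaining
\[
    \phi^*(s) = e^{i\alpha s}\int_0^{\infty}
    e^{-re^{i\alpha}}\, r^{s-1}\, g(re^{i\alpha}) \dd r.
\]
Since $|e^{i\alpha s}| = e^{-\alpha t}$ (with $s=\sigma+it$) and $|e^{-re^{i\alpha}}|=e^{-r\sin\ve}$, while $g(re^{i\alpha})=O(r^{N})$ for some $N$ uniformly on the ray, the remaining integral is bounded by $C(\sigma,\ve) := \int_0^\infty e^{-r\sin\ve}\,r^{\sigma-1}\,|g(re^{i\alpha})|\dd r<\infty$ uniformly in $t$. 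This yields $|\phi^*(\sigma+it)|\le C(\sigma,\ve)\, e^{-(\pi/2-\ve)t}$ for $t>0$; for $t<0$ one rotates symmetrically to the ray of angle $-\alpha$, giving the same bound in $|t|$.

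The main obstacle is the rigorous justification of the contour rotation, which hinges on the polynomial growth of $\tilde f_1$ in $\mathscr{S}_\ve$ supplied by Proposition~\ref{prop-tr}; once that input is granted, the vanishing of the two circular arcs is routine, and the final uniform bound on the rotated integral follows from the exponential factor $e^{-r\sin\ve}$ dominating any polynomial in $r$.
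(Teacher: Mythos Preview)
Your proof is correct and is essentially the same approach as the paper's: the paper simply invokes the Exponential Smallness Lemma of Flajolet--Gourdon--Dumas (\cite[Proposition~5]{FGD95}), whose proof is precisely the contour-rotation argument you have spelled out, using as input the analyticity of $\tilde f_1$ and its polynomial (in fact $O(|\log z|)$) growth in $\mathscr{S}_\ve$ supplied by Proposition~\ref{prop-tr}. Your version differs only in citing the polynomial bound directly rather than the refined logarithmic one, which is harmless for the conclusion.
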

\begin{proof}
This follows from the fact that $\tf_1(z)$ is an entire function,
the estimate $\tf_1(z)=O(|\log z|)$ for $z\in \mathscr{S}_\ve$ and
the Exponential Smallness Lemma (\cite[Proposition 5]{FGD95}).
\end{proof}

On the other hand, since
\[
    |\Gamma(\sigma\pm it)| = O\left(|t|^{\sigma-1/2}
    e^{-\pi|t|/2}\right),
\]
for finite $\sigma$ and $|t|\to\infty$, we can move the line of
integration to the right, summing the residues of all poles
encountered. The result is
\begin{equation}
    \tf_1(z) = \log_{1/p} z +C
    +Q(\log_{1/p} z) +\frac1{2\pi i}
    \int_{1/2-i\infty}^{1/2+i\infty}
    \frac{\Gamma(s)-\phi^*(s)}{1-p^s}\,z^{-s} \dd s,
    \label{C:F-M}
\end{equation}
where (defining $\chi_k:= 2k\pi i/\log(1/p)$)
\begin{align} \label{C:C}
    C := -\frac12 +\frac{\gamma+\phi^*(0)}{\log(1/p)}.
\end{align}
Note that, by definition, we have
\[
    \phi^*(\chi_k)= \sum_{j\ge1} \frac{\mu_j}{j!}\,
    \Gamma(j+\chi_k)
    \left(q^j-2^{-j-\chi_k}\right),
    \qquad \mbox{for \ } k\in\mathbb{Z},
\]
the series being absolutely convergent by the growth order of
$\mu_j$. In particular, when $p=1/3$
\begin{align*}
    \phi^*(0) = \sum_{j\ge1} \frac{\mu_j}{j}\left( \frac{2^j}{3^j}
    -\frac1{2^j}\right) \approx 0.58130\,98083\,52813\,44019\dots,
\end{align*}
so that $C\approx 0.55453\,53308\,02526\,96605\dots$.

To evaluate the remainder integral in \eqref{C:F-M}, we expand the
factor $1/(1-p^s)$ (since $\Re(s)>0$) into a geometric series, and
integrate term by term, giving
\[
    \frac1{2\pi i}
    \int_{1/2-i\infty}^{1/2+i\infty}
    \frac{\Gamma(s)-\phi^*(s)}{1-p^s}\,z^{-s} \dd s =
    \sum_{k\ge0} e^{-p^{-k}z} \left(1-\tf_1(qp^{-k-1}z)
    +\tf_1(p^{-k}z)\right).
\]
Thus the remainder is indeed exponentially small.

We summarize these derivations as follows.
\begin{prop} For $z$ lying in the sector $\mathscr{S}_\ve$,
$\tf_1(z)$ satisfies the asymptotic and exact formula
\begin{equation}\label{C:Fz-exact}
    \tf_1(z) = \log_{1/p} z +C +Q(\log_{1/p} z) +\sum_{k\ge0}
    e^{-p^{-k}z} \left(1-\tf_1(qp^{-k-1}z)+\tf_1(p^{-k}z)\right),
\end{equation}
where $C$ and $Q$ are given in \eqref{C:C} and \eqref{C:Qu},
respectively.
\end{prop}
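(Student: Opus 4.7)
The plan is to consolidate the derivation already sketched above into a single coherent argument built from two standard moves applied to the Mellin--Barnes representation of $\tilde{f}_1(z)$ on the line $\Re(s) = -1/2$: a rightward contour shift past the poles at $s = 0$ and $s = \chi_k$, followed by a geometric-series expansion of $1/(1-p^s)$ on the shifted line.

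For the shift, I would push the contour from $\Re(s) = -1/2$ to $\Re(s) = 1/2$. The horizontal segments at heights $\pm T$ vanish as $T \to \infty$: the lemma supplies $|\phi^*(\sigma \pm it)| = O(e^{-(\pi/2 - \ve)|t|})$, Stirling supplies a matching bound for $\Gamma$, and $1/(1 - p^s)$ is uniformly bounded on any vertical strip bounded away from its zeros. The poles crossed are a double pole at $s = 0$ and simple poles at $s = \chi_k$ for $k \neq 0$; since the enclosing rectangle is traversed clockwise, the old integral equals the new one minus the sum of residues. With $L = \log(1/p)$, the Laurent expansions $\Gamma(s) - \phi^*(s) = s^{-1} - \gamma - \phi^*(0) + O(s)$, $1/(1-p^s) = 1/(Ls) + 1/2 + O(s)$, and $z^{-s} = 1 - s\log z + O(s^2)$ combine to give the coefficient of $s^{-1}$ in the integrand at $s = 0$ equal to $-\log z/L + 1/2 - (\gamma + \phi^*(0))/L$, whose negative is precisely $\log_{1/p} z + C$ with $C$ as in \eqref{C:C}. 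Each simple pole at $\chi_k$ contributes residue $(\Gamma(\chi_k) - \phi^*(\chi_k))/L$, and negating and summing over $k \neq 0$ reconstructs $Q(\log_{1/p} z)$ via \eqref{C:Qu}.

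To evaluate the remaining integral on $\Re(s) = 1/2$ I would expand $1/(1-p^s) = \sum_{k \geq 0} p^{ks}$, valid since $|p^s| = p^{1/2} < 1$ on this line, and exchange sum with integral (legitimated once again by the exponential decay of $\Gamma(s) - \phi^*(s)$). Since $\Gamma$ is the Mellin transform of $e^{-t}$ and, by the definition \eqref{phi-s}, $\phi^*$ is the Mellin transform of $\phi(t) = e^{-t}(\tilde{f}_1(p^{-1} q t) - \tilde{f}_1(t))$, the scaling rule of the Mellin transform inverts the $k$-th term $(\Gamma(s) - \phi^*(s)) p^{ks} z^{-s}$ to $e^{-p^{-k} z} - \phi(p^{-k} z)$. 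Substituting the defining expression for $\phi$ and factoring $e^{-p^{-k} z}$ delivers the $k$-th summand $e^{-p^{-k} z}(1 - \tilde{f}_1(q p^{-k-1} z) + \tilde{f}_1(p^{-k} z))$ of \eqref{C:Fz-exact}.

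The only step that is not essentially mechanical is the double-pole extraction at $s = 0$, where both $\log_{1/p} z$ and the constant $C$ must fall out of a single $s^{-1}$-coefficient; everything else is a direct application of the decay lemma and of routine Mellin-transform bookkeeping. Finally, the series produced by the geometric expansion converges absolutely and uniformly on compact subsets of $\mathscr{S}_\ve$, so \eqref{C:Fz-exact} is a genuine identity rather than merely an asymptotic expansion.
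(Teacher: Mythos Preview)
Your proposal is correct and follows essentially the same approach as the paper: shift the Mellin inversion contour from $\Re(s)=-1/2$ to $\Re(s)=1/2$, collect residues at $s=0$ and $s=\chi_k$, then expand $1/(1-p^s)$ as a geometric series and invert term by term using that $\Gamma$ and $\phi^*$ are Mellin transforms of $e^{-t}$ and $\phi$. Your write-up in fact supplies more detail than the paper on the double-pole extraction and on the justification for the contour shift; the only minor imprecision is that when stating the residue at $\chi_k$ you omit the factor $z^{-\chi_k}$, but you clearly use it in the next clause.
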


Theorem~\ref{thm:mun} then follows from standard de-Poissonization
argument (see Appendix)
\[
    \mathbb{E}(X_n) = \tilde{f}_1(n)- \frac n2\tilde{f}_1''(n)
    + O\left(n^{-2}\right),
\]
and \eqref{C:Fz-exact}.

\paragraph{The variance.} For the asymptotics of the variance, it
proves advantageous to consider suitable Poissonized variance at the
generating function level, which, in the case of $X_n$, can be
handled by the following form
\[
    \tV(z) := \tf_2(z) - \tf_1(z)^2,
\]
where the Poisson generating function of the second moment
$\tf_2(z)$ satisfies the equation \eqref{tM2z}. Then $\tV(z)$
satisfies the functional equation
\begin{align}\label{tVz}
    \tV(z) =  (1-e^{-pz})\tV(pz)+
    e^{-pz}\tV(qz) + g_V^{}(z),
\end{align}
with $\tV(0)=0$, where
\[
    g_V^{}(z) := e^{-pz}\left(1-e^{-pz}\right)
    \left(1+\tf_1(pz) - \tf_1(qz)\right)^2.
\]
Unlike $g$ and $g_2$, which is $O(1)$ for large $z$, $g_V^{}$
is exponentially small for large $z$.

When $p=1/2$, we see that \eqref{tVz} has the closed-form solution
\[
    \tilde{V}(z) = 1-e^{-z}.
\]

When $p\ne q$, define
\[
    \phi_V^*(s) := \int_0^\infty
    e^{-z} z^{s-1}\left(\tV\left(p^{-1} qz\right)-\tV(z)
    +\left(1-e^{-z}\right)
    \left(1+\tf_1(z)
    -\tf_1\left(p^{-1} qz\right)\right)^2\right) \dd z.
\]
By following exactly the same analysis as that for $\tf_1$, we
obtain
\begin{align}\label{tVz-id}
\begin{split}
    \tV(z) &= Q_V(\log_{1/p}z) + \sum_{k\ge0}
    e^{-p^{-k}z} \biggl\{\tV\left(p^{-k-1} qz\right)-\tV(p^{-k}z)
    \\ &\hspace*{3cm}
    +\left(1-e^{-p^{-k}z}\right)
    \left(1+\tf_1(p^{-k}z)
    -\tf_1\left(p^{-k-1} qz\right)\right)^2\biggr\},
\end{split}
\end{align}
for $\Re(z)>0$, where
\[
    Q_V(u) = \frac1{\log(1/p)}\sum_{k\in\mathbb{Z}}
    \phi_V^*(\chi_k) e^{-2k\pi i u}.
\]
Note that $Q_V(u)=1$ when $p=1/2$.
\begin{thm} If $p=1/2$, then the variance of $X_n$ satisfies
\[
    \mathbb{V}(X_n) = 1 + O(n^{-1});
\]
if $p\ne q$, then the variance of $X_n$ is bounded and
asymptotically periodic in nature
\[
    \mathbb{V}(X_n) = Q_V(\log_{1/p}n) + O(n^{-1}).
\]
\end{thm}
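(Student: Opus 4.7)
The plan is to derive the asymptotics of the variance by applying analytic de-Poissonization to the Poissonized variance $\tV(z)$, using the exact representation \eqref{tVz-id} (or, in the symmetric case, the closed form $\tV(z)=1-e^{-z}$) in exactly the same spirit as \eqref{C:Fz-exact} was used for the mean.

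First I would set up the de-Poissonization identity for the variance. The standard Jacquet--Szpankowski expansion, applied separately to $\tf_1$ and $\tf_2$, yields
\begin{align*}
\mathbb{E}(X_n) &= \tf_1(n) - \tfrac{n}{2}\tf_1''(n) + O(n^{-2}),\\
\mathbb{E}(X_n^2) &= \tf_2(n) - \tfrac{n}{2}\tf_2''(n) + O(n^{-2}).
\end{align*}
Since $\tV''(z) = \tf_2''(z) - 2\tf_1'(z)^2 - 2\tf_1(z)\tf_1''(z)$, forming $\mathbb{V}(X_n)=\mathbb{E}(X_n^2)-\mathbb{E}(X_n)^2$ and collecting terms gives
\[
\mathbb{V}(X_n) = \tV(n) - \tfrac{n}{2}\tV''(n) - n\,\tf_1'(n)^2 + O\bigl(n^{-2}\log n\bigr).
\]
Because $\tf_1(z) = \log_{1/p}z + O(1)$ in the sector $\mathscr{S}_\ve$, Cauchy's estimate gives $\tf_1'(n)=O(1/n)$, so $n\,\tf_1'(n)^2 = O(n^{-1})$; and since $\tV$ is bounded with smooth oscillating main term $Q_V(\log_{1/p}z)$, one has $\tV''(n)=O(n^{-2})$ and therefore $(n/2)\tV''(n)=O(n^{-1})$. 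Consequently $\mathbb{V}(X_n) = \tV(n) + O(n^{-1})$.

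It then remains to evaluate $\tV(n)$ in the two cases. When $p=1/2$, one checks directly that $\tV(z) = 1 - e^{-z}$ solves \eqref{tVz} with $\tV(0)=0$, so $\tV(n) = 1 + O(e^{-n})$ and the first assertion follows. When $p\ne q$, the identity \eqref{tVz-id} expresses $\tV(z)$ as $Q_V(\log_{1/p}z)$ plus a sum whose $k$th term carries an exponentially small factor $e^{-p^{-k}z}$ multiplied by quantities of at most polylogarithmic growth on $z=n$; the whole remainder is therefore $O(n^{-N})$ for every $N$, and we obtain $\tV(n) = Q_V(\log_{1/p}n) + O(n^{-N})$, which combined with the previous display yields the second assertion.

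The main technical obstacle is the uniformity required in the de-Poissonization: one must confirm that $\tf_1$, $\tf_2$, and hence $\tV$, belong to the JS-admissible class described in the Appendix, so that each is polynomially bounded on $\mathscr{S}_\ve$ and exponentially small off it, with matching bounds for the first two derivatives. These estimates reduce to iterating the functional equations \eqref{tM1z}, \eqref{tM2z}, and \eqref{tVz}, using that $g_V^{}(z)$ is itself exponentially small for large $z$ in $\mathscr{S}_\ve$. Once admissibility is in hand, the expansions above are automatic and the rest of the proof is routine bookkeeping.
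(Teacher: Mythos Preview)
Your proposal is correct and follows essentially the same route as the paper. Both arguments reduce $\mathbb{V}(X_n)$ to $\tV(n)+O(n^{-1})$ via the JS-admissibility of $\tf_1,\tf_2,\tV$ and then read off the asymptotics of $\tV(n)$ from the closed form $\tV(z)=1-e^{-z}$ (when $p=1/2$) or the identity \eqref{tVz-id} (when $p\ne q$); the only cosmetic difference is that the paper writes the decomposition as $n![z^n]e^z\tV(z)+\bigl(n![z^n]e^z\tf_1(z)^2-(n![z^n]e^z\tf_1(z))^2\bigr)$ and bounds the bracket directly, whereas you expand $\tf_1$ and $\tf_2$ separately and recombine, arriving at the same correction terms $n\tV''(n)$ and $n\tf_1'(n)^2$.
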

\begin{proof} By the definition of $\tilde{V}$
\begin{align*}
    \mathbb{V}(X_n) &= n![z^n]e^z\tilde{f}_2(z) -
    \left(\mathbb{E}(X_n)\right)^2\\
    &= n! [z^n]e^z \tilde{V}(z) - n! [z^n]e^z \tilde{f}_1(z)^2
    - \left(n![z^n]e^z\tilde{f}_1(z)\right)^2,
\end{align*}
which, by the asymptotic nature of the Poisson-Charlier expansions
(see Appendix), is asymptotic to
\begin{align*}
    \mathbb{V}(X_n) &= \tilde{V}(n) +O\left(n\tilde{V}''(n)
    + n\tilde{f}_1'(n)^2\right)\\
    &= \tilde{V}(n) + O\left(n^{-1}\right),
\end{align*}
and the theorem follows from \eqref{tVz-id}.
\end{proof}

Figure~\ref{C:fig2} illustrates the periodic fluctuations of the
variance when $p=1/3$. See also \cite{Prodinger04} for a similar
situation where the variance is not oscillating when $p=1/2$.
\begin{figure}
\begin{center}
\includegraphics[width=7cm]{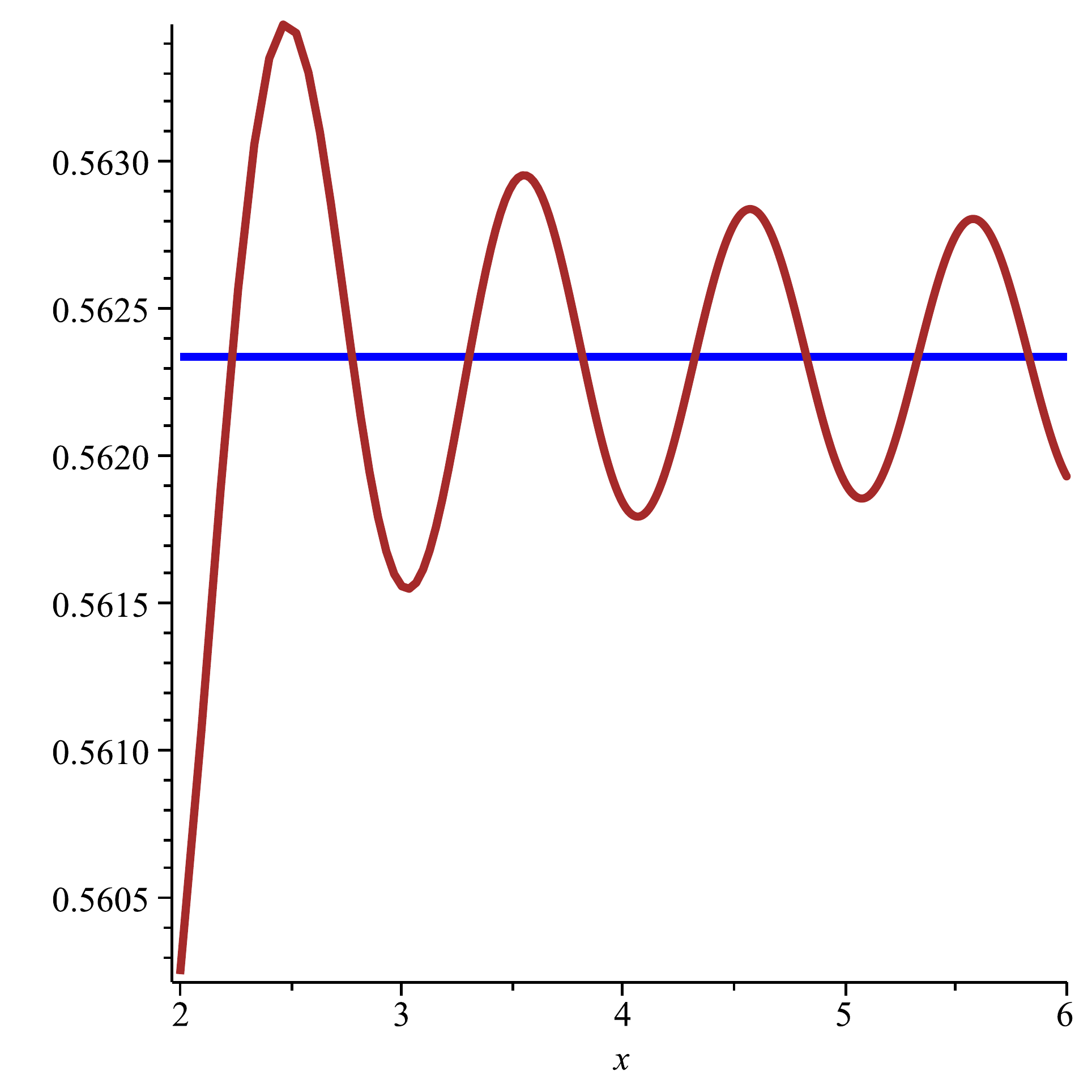}
\includegraphics[width=6.5cm]{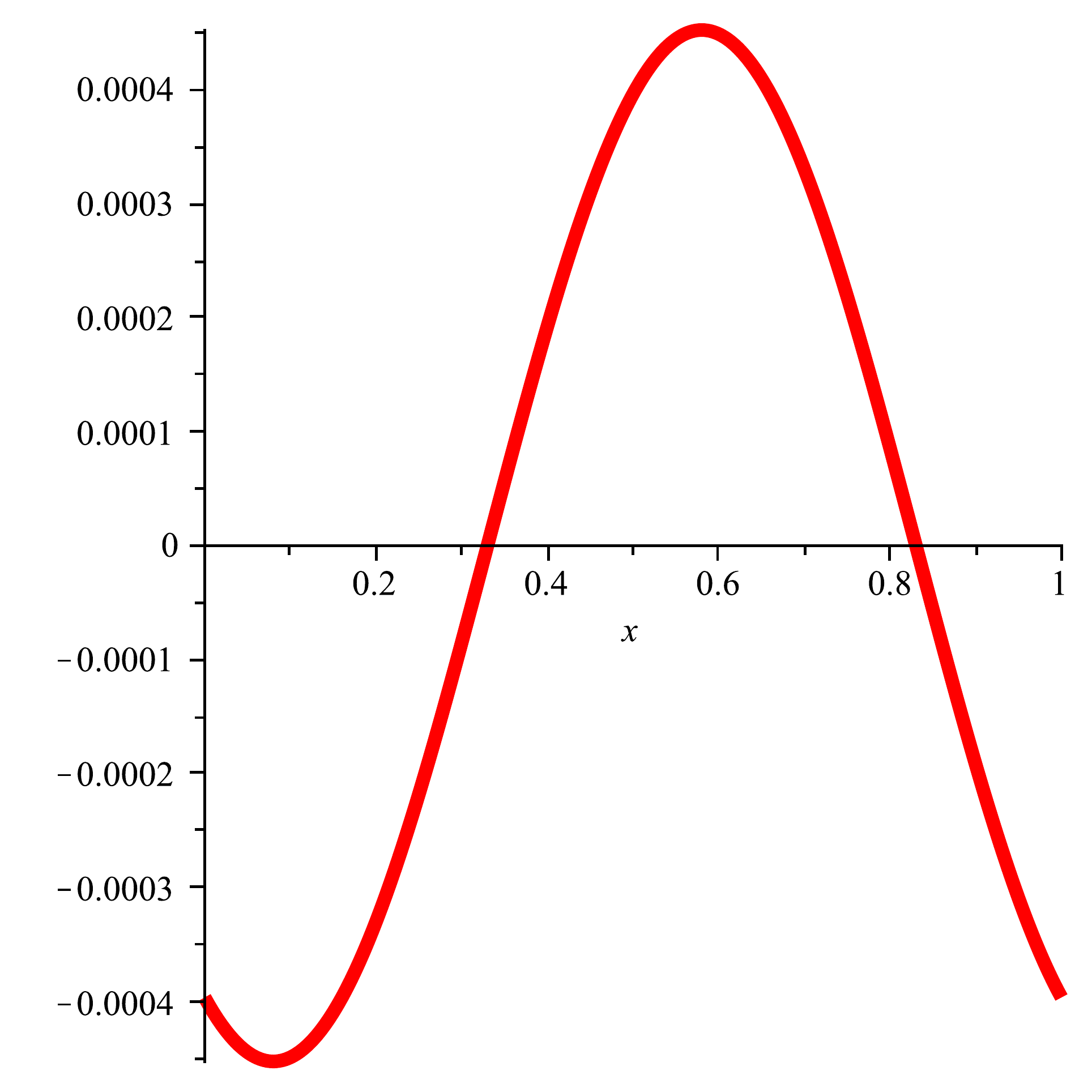}
\end{center}
\caption{\em $p=1/3$: Fluctuation of the periodic function
$Q_V(\log_3 n)$, as approximated by $\mathbb{V}(X_n)
+c_0/n-c_0/(2n^2)$ in logarithmic scale (left) and the first four
oscillating terms of its Fourier series (right). Here the number
$c_0=1/\log(1/p)^2$ and the two additional terms $c_0/n-c_0/(2n^2)$
are chosen for a better numerical correction and graphical display.}
\label{C:fig2}
\end{figure}

For computational purposes, we use the series expression
\begin{align*}
    \phi_V^*(\chi_k)
    &= \sum_{j\ge1}\frac{\mathbb{E}(X_j^2)}{j!}\,
    \Gamma(j+\chi_k)\left(q^j - 2^{-j-\chi_k}\right) +
    \Gamma(\chi_k) \left(1-2^{-\chi_k}\right)\\
    &\qquad +\sum_{j\ge1}\frac{\mu_j^{[2]}}{j!}
    \,\Gamma(j+\chi_k)\left(2\cdot 3^{-j-\chi_k}-4^{-j-\chi_k}
    -q^j2^{-j-\chi_k} \right)\\
    &\qquad +2\sum_{j\ge1}\frac{\mu_j}{j!}
    \,\Gamma(j+\chi_k)\left(2^{-j-\chi_k}-3^{-j-\chi_k}
    -q^j +q^j (1+p)^{-j-\chi_k}\right)\\
    &\qquad -2 \sum_{j\ge1}\frac{\mu_j^{[11]}}{j!}\,
    \Gamma(j+\chi_k)\left((1+p)^{-j-\chi_k}
    - (1+2p)^{-j-\chi_k}\right),
\end{align*}
where $\mu_n^{[2]} := n![z^n]f_1(z)^2$ and $\mu_n^{[11]} := n![z^n]
f_1(pz)f_1(qz)$.

\paragraph{Asymptotic distribution.} We now show that the
distribution of $X_n$ is asymptotically fluctuating and no
convergence to a fixed limit law is possible. We focus on deriving
an asymptotic approximation to the probability $\mathbb{P}(X_n=k)$,
which then leads to an effective estimate for the corresponding
distribution functions. The method of proof we use here relies on
the same analytic de-Poissonization procedure we used for the first
two moments, and requires a uniform estimate with respect to $k$;
see \cite{JS97,KPS96} for a similar analysis.

We begin by considering
\[
    \tilde{A}_k(z)=e^{-z}\sum_{n\ge 0}
    \mathbb{P}(X_n=k)\frac{z^n}{n!},
\]
which satisfies the obvious bound $\tilde{A}_k(x) \le 1$ for
real $x\ge0$. On the other hand, from \eqref{bivar-gf}, it follows
that
\begin{align}
    \tilde{A}_0(z)&=e^{-z}\nonumber\\
    \tilde{A}_{k+1}(z)&=\left(1-e^{-pz}\right)\tilde{A}_k(pz)
    +e^{-pz}\tilde{A}_{k+1}(qz)\quad (k\ge 0).\label{it-Ak}
\end{align}

Iterating \eqref{it-Ak} gives
\[
    \tilde{A}_{k+1}(z) = \sum_{j\ge0}e^{-(1-q^j)z}
    \left(1-e^{-pq^jz}\right)\tilde{A}_{k}(pq^jz),
    \qquad  \mbox{for \ } k\ge 0.
\]
We then deduce the explicit expressions
\begin{align}
    \tilde{A}_k(z)
    &= \sum_{j_1,\dots,j_k\ge0}
    e^{-(1- q\sum_{1\le r\le k}p^{r-1}q^{j_1+\cdots+j_r})z}
    \prod_{1\le r\le k}\left(1
    - e^{-p^rq^{j_1+\cdots+j_{r}}z}\right),\label{exp-Ak}
\end{align}
for $k\ge 1$.

Now define the normalizing function
\[
    \Omega(z):=\prod_{j\ge 0}\left(1-e^{-p^{-j}z}\right).
\]
For convenience, let
\[
    \eta(n) := \{\log_{1/p}n\}
\]
denote the fractional part of $\log_{1/p}n$.
\begin{thm}
The distribution of $X_n$ satisfies
\begin{align} \label{Xn-as-den}
    \mathbb{P}(X_n=\lfloor\log_{1/p}n\rfloor+k)
    =\sum_{j\ge0}\hat{R}_j\left(p^{-\eta(n)+k-j}\right)
    +O\left(\frac{1}{n}\right),
\end{align}
uniformly in $k\in\mathbb{Z}$, where $\hat{R}_k(z) :=
\Omega(z)e^{-pz} \tilde{A}_{k}(qz)$ and $\tilde{A}_k(z)$
is given in \eqref{exp-Ak}.
\end{thm}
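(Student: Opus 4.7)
The plan is to follow the same two-stage analytic strategy used for the mean and the variance: first establish, via analytic de-Poissonization, that $\mathbb{P}(X_n=K) = \tilde{A}_K(n) + O(n^{-1})$ uniformly in the integer index $K$, and then derive the claimed series expansion for $\tilde{A}_{\lfloor\log_{1/p}n\rfloor+k}(n)$ by iterating the recurrence \eqref{it-Ak} in a scale-invariant form. The presence of $\Omega$ and of the fractional part $\eta(n)$ in the statement already suggests that the natural variable is $w:=p^K z$, since then $K=\lfloor\log_{1/p}n\rfloor + k$ and $z=n$ give $w=p^{k-\eta(n)}$.

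Concretely, setting $\Psi_K(w):=\tilde{A}_K(p^{-K}w)$ turns \eqref{it-Ak} into
\[
    \Psi_K(w) = \bigl(1-e^{-p^{-K+1}w}\bigr)\Psi_{K-1}(w)
    +e^{-p^{-K+1}w}\Psi_K(qw),
\]
in which $K$ enters only through the two weights. Iterating downward from $K$ to $0$, using $\Psi_0(w)=e^{-w}=e^{-pw}\tilde{A}_0(qw)$, produces the exact identity
\[
    \Psi_K(w) = \sum_{j=0}^{K}\prod_{i=j+1}^{K}
    \bigl(1-e^{-p^{-i+1}w}\bigr)\cdot
    e^{-p^{-j+1}w}\,\tilde{A}_j\bigl(qp^{-j}w\bigr).
\]
Because $\prod_{i=j+1}^{\infty}(1-e^{-p^{-i+1}w})=\Omega(p^{-j}w)$ and $\Omega(p^{-j}w)\,e^{-p^{-j+1}w}\,\tilde{A}_j(qp^{-j}w)=\hat{R}_j(p^{-j}w)$, letting $K\to\infty$ with $w$ fixed identifies the limit as $\sum_{j\ge 0}\hat{R}_j(p^{-j}w)$; substituting $w=p^{k-\eta(n)}$ and $K=\lfloor\log_{1/p}n\rfloor+k$ then makes $p^{-K}w=n$ and $\Psi_K(w)=\tilde{A}_K(n)$, recovering the series on the right of \eqref{Xn-as-den}. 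The truncation error $\Psi_K(w)-\sum_{j\ge 0}\hat{R}_j(p^{-j}w)$ decomposes into a head part (indices $j\le K$, where only the missing tail $1-\prod_{i\ge K+1}(1-e^{-p^{-i+1}w})=O(e^{-p^{-K}w})$ of the $\Omega$-product contributes) and a tail part (indices $j\ge K+1$, each term bounded by $e^{-p^{-j+1}w}\le e^{-p^{-K}w}$). With $p^{-K}w=n$, both pieces are $O(e^{-n})$ and are easily absorbed by $O(n^{-1})$.

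For the de-Poissonization step, we appeal to the JS-admissibility framework of the Appendix, supplemented by the a priori bound $|\tilde{A}_k(z)|\le e^{|z|-\Re z}$, valid for every $k$ because $\mathbb{P}(X_n=k)\le 1$. This yields $\mathbb{P}(X_n=K)=\tilde{A}_K(n)-\tfrac{n}{2}\tilde{A}_K''(n)+O(n^{-2})$ with constants independent of $K$; a Cauchy bound of $\tilde{A}_K''(n)$ on a disc of radius of order $n$ produces the $O(n^{-1})$ remainder. The main obstacle is precisely this uniformity in $k$ of the de-Poissonization error: unlike for $\tilde{f}_1$ or $\tilde{V}$, where a single Poisson generating function is analyzed, here sectorial growth bounds must be obtained for the \emph{entire} family $\{\tilde{A}_k\}_{k\in\mathbb{Z}}$ with constants that do not deteriorate as $k$ varies. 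This will be derived in the Appendix from the explicit representation \eqref{exp-Ak} together with the recurrence \eqref{it-Ak}.
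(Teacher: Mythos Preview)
Your proposal is correct and follows essentially the same route as the paper: an exact decomposition of $\tilde{A}_K$ into a finite sum of $\hat{R}_j$-terms, extension to the infinite series with an $O(e^{-n})$ truncation error, and uniform analytic de-Poissonization. The paper packages the decomposition more cleanly by setting $\hat{A}_k(z):=\Omega(z)\tilde{A}_k(z)$, for which \eqref{it-Ak} collapses to the telescoping relation $\hat{A}_{k+1}(z)=\hat{A}_k(pz)+\hat{R}_{k+1}(z)$; this is algebraically equivalent to your $\Psi_K$-iteration (your partial products $\prod_{i=j+1}^K(1-e^{-p^{-i+1}w})$ are exactly $\Omega(p^{-j}w)/\Omega(p^{-K}w)$), but it avoids carrying the products and makes the identity $\tilde{A}_k(z)=\Omega(z)^{-1}\sum_{0\le j\le k}\hat{R}_j(p^{k-j}z)$ immediate. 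The one point you defer---uniform JS-admissibility of $\{\tilde{A}_k\}$---is proved in the paper as a separate lemma inside the proof (condition \textbf{(I)} via induction over increasing domains on \eqref{it-Ak}, condition \textbf{(O)} by substituting the trivial bound $|e^z\tilde{A}_k(z)|\le e^{|z|}$ once into the recurrence); note that the a~priori bound $|\tilde{A}_k(z)|\le e^{|z|-\Re z}$ alone is not enough for \textbf{(I)}, since in a sector it is only subexponential, not polynomial.
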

\begin{proof} Write $\hat{A}_{k}(z):=\Omega(z)\tilde{A}_{k}(z)$.
Then by \eqref{it-Ak}, we have
\[
    \hat{A}_{k+1}(z)=\hat{A}_k(pz)+\hat{R}_{k+1}(z),
    \qquad \mbox{for \ } k\ge 0,
\]
which, after iteration, leads to
\[
    \hat{A}_k(z)=\sum_{0\le j\le k}\hat{R}_j(p^{k-j}z),
    \qquad  \mbox{for \ } k\ge 0,
\]
or, equivalently,
\[
    \tilde{A}_k(z)=\frac{1}{\Omega(z)}
    \sum_{0\le j\le k}\hat{R}_j(p^{k-j}z),
    \qquad \mbox{for \ }  k\ge 0.
\]
Since, by definition
\[
    \mathbb{P}(X_n=k) = n![z^n] e^z \tilde{A}_k(z),
\]
we need the following uniform estimates (which are needed to
justify the de-Poissonization; see Appendix).

\begin{lmm} The functions $\tilde{A}_k(z)$ are uniformly
JS-admissible, namely, for $\vert\arg(z)\vert\le\ve, 0<\ve<\pi/2$,
\begin{equation}\label{I}
    \tilde{A}_{k}(z)=O\left(\vert z\vert^{\ve'}\right),
\end{equation}
uniformly in $z$ and $k\ge0$, and, for
$\ve\le\vert\arg(z)\vert\le\pi$,
\begin{equation}\label{O}
    e^{z}\tilde{A}_{k}(z)=O
    \left(e^{(1-\ve')\vert z\vert}\right),
\end{equation}
uniformly in $z$ and $k\ge0$. Here $0<\ve'<1$ and the involved
constants in both cases are absolute.
\end{lmm}
\begin{proof}
Consider first $\vert\arg(z)\vert\le\ve$. Choose $K>0$ large enough
such that $1+2e^{-p\Re(z)}\le 1+\ve'$ for all $z$ with $\vert
z\vert>K$. Moreover, choose $C>0$ such that for $1\le\vert z\vert\le
K$
\[
    \vert\tilde{A}_k(z)\vert\le e^{\vert z\vert-\Re(z)}
    \le C\qquad \text{for}\ k\ge 0.
\]
We use a simple induction to show that
\begin{align} \label{Ak-bd}
    \vert\tilde{A}_k(z)\vert\le C\vert
    z\vert^{\log_{1/q}(1+\ve')}\qquad \text{for}\ k\ge 0.
\end{align}
A similar inductive proof is used in \cite{JS98} where it is
referred to as induction over increasing domains. The claim
\eqref{Ak-bd} holds for $k=0$. Next we assume \eqref{Ak-bd} has been
proved for $k$ and we prove it for $k+1$. The case $1\le\vert
z\vert\le K$ follows from the definition of $C$. If $K<\vert
z\vert\le K/q$, we can use \eqref{it-Ak} and the induction
hypothesis, and obtain
\begin{align*}
    \vert\tilde{A}_{k+1}(z)\vert
    &\le\left(1+e^{-p\Re(z)}\right)
    \vert\tilde{A}_k(pz)\vert+
    e^{-p\Re(z)}\vert\tilde{A}_{k+1}(qz)\vert\\
    &\le C(1+\ve')\vert qz\vert^{\log_{1/q}(1+\ve')}
    =C\vert z\vert^{\log_{1/q}(1+\ve')}.
\end{align*}
Continuing successively the same argument with $K/q^j<\vert
z\vert\le K/q^{j+1}$ for $j\ge1$, the upper bound \eqref{Ak-bd}
follows for all $z$. This concludes the proof of \eqref{I}.

To prove \eqref{O}, let $A_k(z):=e^{z}\tilde{A}_k(z)$. Then
\eqref{it-Ak} becomes
\[
    A_{k+1}(z)=\left(e^{qz}-e^{(q-p)z}\right)A_{k}(pz)
    +A_{k+1}(qz)\qquad (k\ge 0).
\]
Note that we have the (trivial) bound $\vert A_k(z)\vert\le e^{\vert
z\vert}$. Plugging this into the functional equation above yields
\[
    \vert A_{k+1}(z)\vert
    \le\left(e^{q\cos\ve\vert z\vert}
    +e^{(q-p)\cos(\ve)\vert z\vert}\right)
    e^{p\vert z\vert}+e^{q\vert z\vert},
\]
from which \eqref{O} follows.
\end{proof}

By a standard de-Poissonization argument (see Appendix for details
and references), we obtain
\begin{align}\label{asym-prob}
    \mathbb{P}(X_n= k)=\frac{1}{\Omega(n)}
    \sum_{0\le j\le k}\hat{R}_j(p^{k-j}n)
    +O\left(\frac{1}{n^{1-\ve}}\right),
\end{align}
uniformly in $k$, where $\ve>0$ is an arbitrary small constant.

Note that we have the identity
\begin{align} \label{id-density}
    \frac{1}{\Omega(n)}
    \sum_{k\ge0}\sum_{0\le j\le k}\hat{R}_j(p^{k-j}n) = 1.
\end{align}
This is seen as follows.
\begin{align*}
    \sum_{k\ge0}\sum_{0\le j\le k}\hat{R}_j(p^{k-j}n)
    &= \sum_{j\ge0}\sum_{k\ge0} \hat{R}_j(p^kn)\\
    &= \sum_{k\ge0} \Omega(p^kn)e^{-p^{k+1}n}\sum_{j\ge0}
    \tilde{A}_j(qp^kn) \\
    &= \sum_{k\ge0} \Omega(p^kn)
    \left(1-\left(1-e^{-p^{k+1}n} \right)\right)\\
    &= \sum_{k\ge0}\left(\Omega(p^kn)-\Omega(p^{k+1}n)\right)\\
    &= \Omega(n),
\end{align*}
which proves \eqref{id-density}.

Now
\begin{align*}
    \Omega(n)=\prod_{j\ge 0}\left(1-e^{-p^{-j}n}\right)
    =1+O\left(e^{-n}\right).
\end{align*}
This and \eqref{asym-prob} implies that
\[
    \sum_{0\le j\le k}\hat{R}_j(p^{k-j}n)=O(1),
\]
uniformly in $k$. Thus
\[
    \mathbb{P}(X_n=k)=\sum_{0\le j\le k}\hat{R}_j(p^{k-j}n)
    +O\left(\frac{1}{n^{1-\ve}}\right),
\]
uniformly in $k$.

Finally, observe that
\[
    \sum_{j\ge k+1}\hat{R}_j(p^{k-j}n)
    \le\sum_{j\ge k+1}e^{-p^{k+1-j}n}=
    \sum_{j\ge0}e^{-p^{-j}n}=O\left(e^{-n}\right).
\]
Thus
\begin{equation}\label{asym-pp}
    \mathbb{P}(X_n=k)=\sum_{j\ge0}
    \hat{R}_j(p^{k-j}n)
    +O\left(\frac{1}{n^{1-\ve}}\right),
\end{equation}
uniformly in $k$.

Since the mean is asymptotic to $\log_{1/p}n$, we replace $k$ by
$\lfloor\log_{1/p}n\rfloor+k$. Then
\[
    \mathbb{P}(X_n=\lfloor\log_{1/p}n\rfloor+k)
    =\sum_{j\ge0}\hat{R}_j(p^{-\eta(n)+k-j})
    +O\left(\frac{1}{n^{1-\ve}}\right),
\]
uniformly in $k$, where $\eta(n)=\{\log_{1/p}n\}$. Because of the
periodicity, the limiting distribution of $X_n-\lfloor \log_{1/p}n
\rfloor$, in general, does not exist. However, if we consider
instead a subsequence $n_j$ of positive integers such that
$\eta(n_j)\to \theta\in(0,1)$, as $j\to\infty$, then the limit law
does exist. The series on the right-hand side sums (over all $k$)
asymptotically to $1$ by \eqref{id-density}.

Finally, the finer error term $O(n^{-1})$ in \eqref{Xn-as-den} is
obtained by refining the same procedure by including, say, one more
term in the asymptotic expansion.
\end{proof}

On the other hand, from \eqref{exp-Ak}, we see that
$\mathbb{P}(X_{n}=k)$ is exponentially small for $k=O(1)$.

A similar analysis can be given for the distribution function of
$X_n$ (one only has to divide \eqref{bivar-gf} by $1-e^{y}$). This
then yields the following estimate for the distribution.

\begin{cor}
The distribution function of $X_n$ satisfies
\[
    \mathbb{P}(X_n-\lfloor\log_{1/p}n\rfloor
    \le k)=\sum_{j\ge0}\hat{S}_j(p^{-\eta(n)+k-j})
    +O\left(\frac{1}{n}\right),
\]
uniformly in $k\in\mathbb{Z}$, where $\hat{S}_k(z)=\sum_{j\le k}\hat{R}_j(z)$.
\end{cor}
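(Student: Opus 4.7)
The plan is to imitate the proof of the preceding theorem, working with the cumulative Poisson generating function
\[
    \tilde{B}_k(z) := \sum_{0\le j\le k}\tilde{A}_j(z)
    =e^{-z}\sum_{n\ge0}\mathbb{P}(X_n\le k)\frac{z^n}{n!}.
\]
Summing the recurrence \eqref{it-Ak} over its index from $0$ to $k-1$, the boundary contributions $\tilde{A}_0(z)=e^{-z}$ and $e^{-pz}\tilde{A}_0(qz)=e^{-z}$ cancel on the two sides, so what remains is a functional equation of exactly the same shape as \eqref{it-Ak}:
\[
    \tilde{B}_k(z)=\left(1-e^{-pz}\right)\tilde{B}_{k-1}(pz)
    +e^{-pz}\tilde{B}_k(qz),\qquad k\ge1.
\]
Setting $\hat{B}_k(z):=\Omega(z)\tilde{B}_k(z)$ and using the identity $\Omega(z)(1-e^{-pz})=\Omega(pz)$, I obtain the telescoping relation $\hat{B}_k(z)=\hat{B}_{k-1}(pz)+\hat{S}_k(z)$ with $\hat{S}_k(z)=\Omega(z)e^{-pz}\tilde{B}_k(qz)=\sum_{j\le k}\hat{R}_j(z)$, which iterates to the closed form
\[
    \tilde{B}_k(z)=\frac{1}{\Omega(z)}
    \sum_{0\le j\le k}\hat{S}_j\!\left(p^{k-j}z\right).
\]

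Next I would verify uniform JS-admissibility of the family $\{\tilde{B}_k\}_{k\ge0}$. Because $\mathbb{P}(X_n\le k)\in[0,1]$, the trivial starting bound $|\tilde{B}_k(z)|\le e^{|z|-\Re(z)}$ is identical to the one used for $\tilde{A}_k$, so the induction-over-increasing-domains argument of the preceding lemma transfers verbatim, delivering the analogues of \eqref{I} and \eqref{O} uniformly in $k\ge0$. Standard de-Poissonization (see Appendix) then gives $\mathbb{P}(X_n\le k)=\tilde{B}_k(n)+O(n^{-1+\ve})$, uniformly in $k\in\mathbb{Z}$.

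Plugging in the closed form for $\tilde{B}_k(n)$, using $\Omega(n)=1+O(e^{-n})$ together with the tail bound $\sum_{j\ge k+1}\hat{S}_j(p^{k-j}n)=O(e^{-n})$, which is inherited from the corresponding bound for the $\hat{R}_j$ since $\tilde{B}_j$ is bounded by $1$ on the positive real axis (so $|\hat{S}_j(p^{k-j}n)|\le e^{-p^{k-j+1}n}$), yields
\[
    \mathbb{P}(X_n\le k)=\sum_{j\ge0}\hat{S}_j\!\left(p^{k-j}n\right)
    +O\!\left(n^{-1+\ve}\right),
\]
uniformly in $k$. Replacing $k$ by $\lfloor\log_{1/p}n\rfloor+k$ and noting that $p^{\lfloor\log_{1/p}n\rfloor}\cdot n=p^{-\eta(n)}$ turns the argument of $\hat{S}_j$ into $p^{-\eta(n)+k-j}$, giving the stated identity; the sharper error term $O(n^{-1})$ then follows by carrying one additional term in the de-Poissonization expansion, exactly as indicated at the end of the previous theorem's proof. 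The main obstacle will be purely one of bookkeeping: all estimates must remain uniform in $k$ simultaneously, which forces the inductive JS-admissibility proof to proceed on exactly the same annular rings $K/q^j<|z|\le K/q^{j+1}$ as before so that the constants stay absolute in $k$. Since the functional equations are identical in shape to those for $\tilde{A}_k$ and the positive-real bound is the same, no new analytic input is required beyond what has already been developed.
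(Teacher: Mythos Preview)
Your proposal is correct and follows essentially the same route as the paper, which only hints that ``one has to divide \eqref{bivar-gf} by $1-e^{y}$'': your step of summing the recurrence \eqref{it-Ak} is exactly the coefficient-level translation of that division, and the resulting functional equation, normalization by $\Omega$, iteration to the closed form, uniform JS-admissibility, and de-Poissonization all proceed identically to the proof of the preceding theorem. The identification $\hat{S}_k(z)=\Omega(z)e^{-pz}\tilde{B}_k(qz)=\sum_{j\le k}\hat{R}_j(z)$ and the tail bound via $\tilde{B}_j\le 1$ on $\mathbb{R}_{\ge0}$ are both correct, so no new ingredient is needed.
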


When $p=1/2$, we have the representation
\[
    \sum_{k\ge0} \tilde{A}_k(z) u^k
    = \prod_{j\ge1}\left(1+(u-1)(1-e^{-z/2^j})\right),
\]
which gives
\[
    \tilde{A}_k(z)=e^{-z}\sum_{1\le j_1<\cdots<j_k}
    \prod_{1\le r\le k}\left(e^{z/2^{j_r}}-1\right);
\]
compare with \eqref{it-Ak}. This expression was already derived in
\cite{RJS93} where different expressions of the asymptotic
distributions are given.

\section*{Acknowledgements}

We thank the referee for helpful comments.

\section*{Appendix. Analytic de-Poissonization and JS-admissible
functions}

We develop the required tools for justifying the growth order of the
functions involved in this paper, as well as systematic means of
justifying the de-Poissonization procedure, based on the notion of
JS-admissible functions (combining ideas from Jacquet and Szpankowski
\cite{JS98} and the classical paper by Hayman \cite{Hayman56}).
The following materials, different from those in \cite{JS98}, are
modified from \cite{FHV13}, where more details are provided.

\begin{definition} An entire function $\tilde{f}$ is said to be
JS-admissible, denoted by $\tilde{f}\in\JS$, if the following two
conditions hold for $|z|\ge1$.
\begin{itemize}

\item[\textbf{(I)}] There exist $\alpha,\beta\in\mathbb{R}$ such
that uniformly for $|\arg(z)|\le\ve$,
\[
    \tilde{f}(z)
    = O\left(|z|^{\alpha} (\log_+|z|)^\beta\right),
\]
where $\log_+x := \log (1+x)$.
\item[\textbf{(O)}] Uniformly for $\ve\le|\arg(z)|\le\pi$,
\[
    f(z)
    := e^{z}\tilde{f}(z)
    = O\left(e^{(1-\ve')|z|}\right).
\]
\end{itemize}
Here and throughout this paper, the generic symbols $\ve, \ve'$
denote small quantities whose values are immaterial and not
necessarily the same at each occurrence.
\end{definition}
For convenience, we also write $\tilde{f}\in\JS_{\!\!\alpha,\beta}$
to indicate the growth order of $\tilde{f}$ inside the sector
$|\arg(z)|\le\ve$.

Note that if $\tilde{f}$ satisfies condition \textbf{(I)}, then, by
Cauchy's integral representation for derivatives (or by Ritt's
theorem; see \cite[Ch.~1, \S~4.3]{Olver74}), we have,
\begin{align*}
    \tilde{f}^{(k)}(z)
    % &= O\left(\oint_{|w-z|=\ve |z|}
    % \frac{|w|^\alpha|
    % (\log_+ |w|)^\beta}{|w-z|^{k+1}}|\dd w|\right)\\
    &= O\left(|z|^{\alpha-k}(\log_+|z|)^\beta\right).
\end{align*}

On the other hand, by Cauchy's integral representation, we also have
\begin{align*}
    a_n
    &= \frac{n!}{2\pi i} \oint_{|z|=n}
    z^{-n-1} e^z\tilde{f}(z) \dd z \\
    &\approx \tilde{f}(n)
    \frac{n!}{2\pi i} \oint_{|z|=n} z^{-n-1} e^z \dd z\\
    &= \tilde{f}(n),
\end{align*}
since the saddle-point $z=n$ of the factor $z^{-n}e^z$ is unaltered
by the comparatively more smooth function $\tilde{f}(z)$.

The latter analytic viewpoint provides an additional advantage of
obtaining an expansion by using the Taylor expansion of $\tilde{f}$
at $z=n$, yielding
\begin{align}\label{PC}
    a_n
    =\sum_{j\ge0} \frac{\tilde{f}^{(j)}(n)}{j!}\tau_j(n),
\end{align}
where
\begin{align*}
    \tau_j(n)
    := n![z^n] (z-n)^j e^z
    = \sum_{0\le \ell \le j}
    \binom{j}{\ell} (-1)^{j-\ell} \frac{n!n^{j-\ell}}{(n-\ell)!}
    \qquad(j=0,1,\dots),
\end{align*}
and $[z^n]\phi(z)$ denotes the coefficient of $z^n$ in the Taylor
expansion of $\phi(z)$. We call such an expansion \emph{the
Poisson-Charlier expansion} since the $\tau_j$'s are essentially the
Charlier polynomials $C_j(\lambda,n)$ defined by
\[
    C_j(\lambda,n)
    := \lambda^{-n}n![z^n](z-1)^je^{\lambda z},
\]
so that $\tau_j(n) = n^j C_j(n,n)$. For other terms used in the
literature and more properties, see \cite{HFV10} and the references
therein. In particular, the expansion \eqref{PC} is absolutely
convergent when $\tilde{f}$ is entire.

\begin{prop}\label{prop-PC-asymp}
Assume $\tilde{f}\in\JS_{\!\!\alpha,\beta}$. Let $f(z) := e^z
\tilde{f}(z)$. Then the Poisson-Charlier expansion \eqref{PC} of
$f^{(n)}(0)$ is also an asymptotic expansion in the sense that
\begin{align*}
    a_n
    &:= f^{(n)}(0)
    = n![z^n]f(z)
    = n![z^n]e^z \tilde{f}(z) \\
    &= \sum_{0\le j<2k}
    \frac{\tilde{f}^{(j)}(n)}{j!}\,\tau_j(n) + O\left(n^{\alpha-k}
    \left(\log n\right)^\beta\right),
\end{align*}
for $k=1,2,\dots$.
\end{prop}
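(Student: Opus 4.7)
The plan is to apply the saddle-point method to the Cauchy integral representation
\[
    a_n = \frac{n!}{2\pi i}\oint_{|z|=n} z^{-n-1} e^z \tilde{f}(z) \dd z,
\]
whose integrand carries the saddle point $z=n$ coming from the factor $z^{-n}e^z$. First I split the circle of integration into a main arc $\mathcal{M} := \{ne^{i\theta}:|\theta|\le\theta_n\}$, with $\theta_n := n^{-1/2+\delta}$ for some small $\delta\in(0,1/2)$, and its complementary tail $\mathcal{T}$. The goal is then to show that the tail contribution is super-polynomially small, while on $\mathcal{M}$ one Taylor-expands $\tilde{f}$ at the saddle point and integrates term by term.

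For the tail, if $|\arg z|\ge\ve$, condition \textbf{(O)} yields $|e^z\tilde{f}(z)|=O(e^{(1-\ve')n})$, which combined with $|z|^{-n-1}=n^{-n-1}$ and Stirling's formula gives a contribution of order $e^{-\ve'n/2}$. In the transition region $\theta_n\le|\arg z|\le\ve$, condition \textbf{(I)} gives $|\tilde{f}(z)|=O(n^\alpha(\log n)^\beta)$, while $\Re z = n\cos(\arg z)\le n - cn\theta_n^2 = n - cn^{2\delta}$ for some absolute constant $c>0$, so that the resulting contribution is of order $n^{\alpha+1/2}(\log n)^\beta e^{-cn^{2\delta}}$, which is $O(n^{-K})$ for every $K$.

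On the main arc, Ritt's theorem (i.e.\ Cauchy's inequality on small discs, cf.\ \cite[Ch.~1]{Olver74}) combined with condition \textbf{(I)} yields $\tilde{f}^{(j)}(w)=O(n^{\alpha-j}(\log n)^\beta)$ uniformly for $w$ in a slightly narrower sector of radius $\asymp n$; this applies in particular on $\mathcal{M}$ itself and on every straight segment from $n$ to $z\in\mathcal{M}$. Taylor's formula with integral remainder then gives
\[
    \tilde{f}(z)=\sum_{0\le j<2k}\frac{\tilde{f}^{(j)}(n)}{j!}(z-n)^j+R_{2k}(z), \qquad |R_{2k}(z)|\le C\,n^{\alpha-2k}(\log n)^\beta|z-n|^{2k},
\]
uniformly on $\mathcal{M}$. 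Substituting into the Cauchy integral, the sum of the first $2k$ terms produces, up to super-polynomially small errors from replacing $\mathcal{M}$ by the full circle, exactly $\sum_{j<2k}\tilde{f}^{(j)}(n)\tau_j(n)/j!$, by the very definition $\tau_j(n)=\frac{n!}{2\pi i}\oint_{|z|=n}z^{-n-1}e^z(z-n)^j\dd z$. The remainder is controlled by the Gaussian-moment estimate
\[
    \frac{n!}{2\pi}\int_{\mathcal{M}}|z|^{-n-1}e^{\Re z}|z-n|^{2k}\,|\dd z|=O(n^k),
\]
which, after the substitution $\theta\mapsto\theta/\sqrt{n}$ and Stirling, reduces to the $(2k)$th absolute moment of the standard Gaussian (the truncation at $|\theta|\le\theta_n$ does not affect the leading order since $\theta_n\sqrt{n}\to\infty$). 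Multiplying by the pointwise bound on $R_{2k}$ yields a total remainder of order $n^{\alpha-k}(\log n)^\beta$, as asserted.

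The main technical obstacle is the simultaneous calibration of the two regions: $\theta_n$ must be large enough that $e^{-cn\theta_n^2}$ is super-polynomially small (forcing $\theta_n\sqrt{n}\to\infty$), yet small enough that the straight-line Taylor expansion at $z=n$ on $\mathcal{M}$ remains within the narrower sector where the Ritt bounds on $\tilde{f}^{(2k)}$ hold uniformly. The choice $\theta_n=n^{-1/2+\delta}$ for any sufficiently small $\delta>0$ achieves both and, combined with the exact identity for $\tau_j$ as a Cauchy integral, makes the errors produced at each step truly super-polynomially small rather than merely bounded.
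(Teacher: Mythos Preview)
Your proof is correct and follows exactly the saddle-point/de-Poissonization scheme that the paper itself sketches heuristically just before stating the proposition (Cauchy integral on $|z|=n$, condition \textbf{(O)} killing the far tail, condition \textbf{(I)} plus Ritt's bounds for the Taylor expansion near the saddle point). The paper does not supply a detailed proof of this proposition at all---it merely states it and defers to \cite{JS98,FHV13}---so your argument is precisely the standard one those references contain, with the customary calibration $\theta_n=n^{-1/2+\delta}$.
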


The polynomial growth of condition \textbf{(I)} is sufficient for
all our uses; see \cite{JS98} for more general versions.

The real advantage of introducing admissibility is that it opens the
possibility of developing closure properties as we now briefly discuss.

\begin{lmm} \label{lm-closure} Let $m$ be a nonnegative integer and
$\alpha\in(0,1)$.
\begin{itemize}

\item[(i)] $z^m, e^{-\alpha z}\in\JS$.

\item[(ii)] If $\tilde{f}\in\JS$, then $\tilde{f}(\alpha
z),z^m\tilde{f}\in\JS$.

\item[(iii)] If $\tilde{f}, \tilde{g} \in\JS$,
then $\tilde{f}+\tilde{g}\in\JS$.

\item[(iv)] If $\tilde{f}\in\JS$, then the product
$\tilde{P}\tilde{f}\in\JS$, where $\tilde{P}$ is a polynomial
of $z$.

\item[(v)] If $\tilde{f}, \tilde{g}\in\JS$, then $\tilde{h} \in\JS$,
where $\tilde{h}(z) := \tilde{f}(\alpha z)\tilde{g}((1-\alpha)z)$.

\item[(vi)] If $\tilde{f}\in\JS$, then $\tilde{f}'\in\JS$, and thus
$\tilde{f}^{(m)}\in\JS$.

\end{itemize}
\end{lmm}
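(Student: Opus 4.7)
The plan is to verify conditions \textbf{(I)} and \textbf{(O)} for each of the six assertions in turn, working directly from the definitions. Most items reduce to bookkeeping with the defining bounds; the real analytic content is concentrated in (vi) and, to a lesser extent, (v).

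Items (i)--(iv) are essentially routine. For $z^m$, condition \textbf{(I)} gives $\alpha=m,\beta=0$, while \textbf{(O)} follows from $|e^{z}z^m|\le e^{|z|\cos\ve}|z|^m$ on $\ve\le|\arg z|\le\pi$, after absorbing the polynomial factor into a marginally smaller exponential rate. For $e^{-\alpha z}$, \textbf{(I)} holds trivially in the inner sector (where $\Re(z)\ge|z|\cos\ve$), and \textbf{(O)} follows from $e^{z}\cdot e^{-\alpha z}=e^{(1-\alpha)z}$ with $\ve'=\alpha$. Scaling $\tilde{f}(z)\mapsto\tilde{f}(\alpha z)$ preserves both conditions since $\arg(\alpha z)=\arg z$ and $|\alpha z|\le|z|$, with \textbf{(O)} coming from the rewrite $e^{z}\tilde{f}(\alpha z) = e^{(1-\alpha)z}f(\alpha z)$. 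Multiplication by a polynomial just shifts the exponent in \textbf{(I)} and leaves \textbf{(O)} intact after the same absorbing trick. Sums are handled by $|\tilde{f}+\tilde{g}|\le|\tilde{f}|+|\tilde{g}|$, and (iv) follows by composing (i)--(iii).

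For (v), I would write $e^{z}\tilde{h}(z) = f(\alpha z)g((1-\alpha)z)$. Since $\arg(\alpha z)=\arg((1-\alpha)z)=\arg z$, both factors simultaneously satisfy the outer bound on $\ve\le|\arg z|\le\pi$, and their product is $O(e^{(1-\ve')\alpha|z|}\cdot e^{(1-\ve')(1-\alpha)|z|})=O(e^{(1-\ve')|z|})$. Condition \textbf{(I)} follows by multiplying polynomial-times-log bounds. The point to notice is that both scalings preserve the argument, so one never mixes the inner and outer regimes.

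The main obstacle is (vi). Condition \textbf{(I)} for $\tilde{f}'$ is already noted in the excerpt via Cauchy's formula on a disk of radius $|z|/2$. For condition \textbf{(O)}, I would instead apply Cauchy's formula on a disk of \emph{fixed} radius $r=1$,
\[
    \tilde{f}'(z) = \frac{1}{2\pi i}\oint_{|w-z|=1}
    \frac{\tilde{f}(w)}{(w-z)^2}\dd w.
\]
For $\ve\le|\arg z|\le\pi$ with $|z|$ sufficiently large, every $w$ on the contour still satisfies $|\arg w|\ge\ve/2$ (the bounded displacement shifts the argument by $O(1/|z|)$), so condition \textbf{(O)} for $\tilde{f}$ applies on the whole contour and gives $|\tilde{f}(w)|=e^{-\Re(w)}|f(w)|=O(e^{-\Re(w)+(1-\ve')|w|})$. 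Using $\Re(w)=\Re(z)+O(1)$ and $|w|\le|z|+1$, this yields $|\tilde{f}'(z)|=O(e^{-\Re(z)+(1-\ve')|z|})$, so multiplying by $e^{\Re(z)}$ produces the required bound for $e^{z}\tilde{f}'(z)$. The delicate point is precisely this uniform control of $\arg w$ along the contour; once that is in place, $\tilde{f}^{(m)}\in\JS$ follows by induction on $m$.
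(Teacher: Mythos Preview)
Your proposal is correct and is precisely the kind of direct verification the paper has in mind; the paper's own proof reads in full ``Straightforward and omitted.'' Your treatment of (vi), using Cauchy's formula on a disk of fixed radius~$1$ to propagate condition~\textbf{(O)} (with the observation that a bounded shift perturbs $\arg w$ by $O(1/|z|)$), is exactly the natural argument, and the remaining items are routine as you say.
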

\begin{proof} Straightforward and omitted.
\end{proof}

Specific to our need are the following transfer principles,
first the real version and then the complex one.
\begin{lmm} Let $\tilde{f}(z)$ and $\tilde{g}(z)$
be entire functions satisfying \label{lmm-fg}
\begin{align}\label{dfe-DST}
    \tilde{f}(z)=(1-e^{-pz})\tilde{f}(pz)
    +e^{-pz}\tilde{f}(qz) +\tilde{g}(z),
\end{align}
with $\tf(0)=\tilde{g}(0)=0$.
If $\tilde{g}(x) = O(x^{\alpha}(\log_+x)^\beta)$
for real large $x$, where $\alpha,\beta\in\mathbb{R}$, then
\begin{align}\label{trsfr}
    \tf(x) = \left\{\begin{array}{ll}
        O(x^{\alpha}(\log_+x)^\beta), & \text{if }\alpha>0;\\
        \left\{\begin{array}{ll}
            O((\log_+x)^{\beta+1}),& \text{if }\beta>-1\\
            O(\log_+ \log_+ x), & \text{if } \beta=-1\\
            O(1), &\text{if }\beta<-1
        \end{array}\right\}, & \text{if }\alpha=0;\\
        O(1), &\text{if } \alpha<0.
    \end{array}\right.
\end{align}
\end{lmm}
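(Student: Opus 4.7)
My plan is to iterate the functional equation so as to obtain an explicit series representation for $\tilde{f}(x)$, which can then be bounded term-by-term. Rewriting \eqref{dfe-DST} as
\[
    \tilde{f}(z) - \tilde{f}(pz)
    = e^{-pz}\bigl(\tilde{f}(qz)-\tilde{f}(pz)\bigr) + \tilde{g}(z),
\]
substituting $z\mapsto p^k z$ for $k=0,1,\dots,N-1$ and telescoping, then letting $N\to\infty$ (using $\tilde{f}(0)=0$ and continuity), I get
\[
    \tilde{f}(x) = \sum_{k\ge 0} e^{-p^{k+1}x}\bigl(\tilde{f}(qp^k x)-\tilde{f}(p^{k+1} x)\bigr) + \sum_{k\ge 0}\tilde{g}(p^k x) =: E(x) + S(x).
\]

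To control $E(x)$ I would first bootstrap a crude polynomial bound on $\tilde{f}$. With $M(x):=\max_{0\le t\le x}|\tilde{f}(t)|$, the functional equation together with $p\le q$ gives $M(x)\le M(qx)+\max_{t\le x}|\tilde{g}(t)|$, and iterating $O(\log x)$ times yields an a priori bound of the form $\tilde{f}(x)=O(x^{\alpha_+}(\log_+ x)^{\beta_++1})$ with $\alpha_+:=\max(\alpha,0)$, $\beta_+:=\max(\beta,0)$. Splitting the series for $E(x)$ at $K:=\lfloor\log_{1/p}x\rfloor$, the tail $k\ge K$ satisfies $p^k x\le 1$, so $|\tilde{f}(qp^k x)|+|\tilde{f}(p^{k+1}x)|\le C\,p^k x$ by $\tilde{f}(0)=0$, giving a convergent geometric series of total $O(1)$; the head $k<K$ is controlled by the super-exponential smallness of $e^{-p^{k+1}x}$, which (after reindexing $m=K-1-k$) reduces to $\sum_{m\ge 0} e^{-p^{-m}} p^{-(m+1)\alpha_+}$, a convergent sum independent of $x$. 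Thus $E(x)=O(1)$.

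The main series $S(x)=\sum_{k\ge 0}|\tilde{g}(p^k x)|$ is bounded directly from the hypothesis on $\tilde{g}$. Writing $M:=\lfloor\log_{1/p}x\rfloor$, the tail $k>M$ (where $p^k x<1$) contributes $O(1)$ via $\tilde{g}(y)=O(y)$ near $0$, and the head $0\le k\le M$ reduces, through $\log_+(p^k x) \asymp (M-k)\log(1/p)$ and the reindexing $j=M-k$, to essentially $\sum_{j=1}^M p^{-j\alpha}\bigl(j\log(1/p)\bigr)^\beta$. Elementary asymptotics of this geometric/harmonic-type sum then produce the four rates in \eqref{trsfr}: for $\alpha>0$ a convergent geometric in $p^{-\alpha}$ gives $O(x^\alpha(\log_+ x)^\beta)$; for $\alpha<0$ the geometric converges to $O(1)$; and for $\alpha=0$ the sum becomes $\sum_{j=1}^M j^\beta$, yielding $O((\log_+ x)^{\beta+1})$, $O(\log_+\log_+ x)$, or $O(1)$ according as $\beta>-1$, $\beta=-1$, or $\beta<-1$.

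The principal obstacle I foresee is the borderline case $\alpha=0$ with $\beta\le-1$: there the desired bound is $O(\log\log x)$ or $O(1)$, but a direct fixed-point induction of the form $M(x)\le C$ cannot close, because each round of iteration contributes a non-vanishing amount of $\tilde{g}$. The explicit series representation bypasses this difficulty by summing all scales at once and matching the slow decay of $(\log_+(p^k x))^\beta$ against the convergence threshold of $\sum_j j^\beta$, which is why I favor the iteration route over direct induction.
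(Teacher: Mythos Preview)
Your argument is correct, and it takes a genuinely different route from the paper's. The paper does not iterate \eqref{dfe-DST} directly; instead it dominates $\tilde{f}$ by the solution $\lambda$ of the simpler \emph{trie recurrence} $\lambda(x)=\lambda(px)+\lambda(qx)+\nu(x)$ (with $\nu$ a crude polynomial majorant of $\tilde{g}$), uses the explicit double sum $\lambda(x)=\sum_{j,\ell\ge0}\binom{j+\ell}{j}\nu(p^jq^\ell x)$ to obtain a polynomial a priori bound, and then invokes standard Mellin arguments to read off the four growth rates in \eqref{trsfr}. Your approach replaces both steps by elementary real-variable analysis: the identity $\tilde{f}(x)=E(x)+S(x)$ separates the exponentially damped feedback $E(x)=O(1)$ from the pure iterate $S(x)=\sum_{k\ge0}\tilde{g}(p^kx)$, and the cases in \eqref{trsfr} then fall out of the elementary asymptotics of $\sum_{j=1}^{M}p^{-j\alpha}j^{\beta}$ with $M\asymp\log_{1/p}x$. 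The benefit of your route is that it is entirely self-contained and avoids Mellin machinery; the benefit of the paper's route is that the same Mellin setup is reused elsewhere (for the precise constants and periodic functions in the main theorems), so proving the lemma this way costs nothing extra in that context. Your a priori bootstrap $M(x)\le M(qx)+\max_{t\le x}|\tilde{g}(t)|$ is in fact cleaner than the paper's trie-recurrence comparison, since it exploits $p\le q$ directly.
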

\begin{proof}
The idea of the proof here is that $\tf(x)$ behaves asymptotically like
the following recurrence
\[
    \phi(x) = \phi(px) + \tilde{g}(x),
\]
with $\phi(0)=0$. To that purpose, we need only to show that
$\tilde{f}(x)$ grows at most polynomially for large $x$. This is
easily achieved by noticing that $f$ is bounded above by the
function defined by the \emph{trie-recurrence}
\[
    \lambda(x) = \lambda(px)+\lambda(qx) +\nu(x),
\]
with $\lambda(0)=\nu(0)=0$, where
\[
    \nu(x) := \left\{\begin{array}{ll}
        Kx^{\bar{\alpha}}, &\text{if }x>1;\\
        K x, & \text{if }0\le x\le 1,
    \end{array}\right.
\]
$K>0$ being a large constant and $\bar{\alpha}:= \max\{\tr{\alpha},
0\}+1$. Note that the exact solution of $\lambda$ is given by
\[
    \lambda(x) = \sum_{j,\ell\ge0}\binom{j+\ell}{j}
    \nu\left(p^j q^\ell x\right).
\]
We then deduce, from this, that $\lambda(x) = O(x^{\bar{\alpha}})$
for large $x$. Accordingly, $\tilde{f}$ is polynomially bounded.
The more precise estimates \eqref{trsfr} then follows from
standard Mellin arguments (by subtracting the
first few $\bar{\alpha}+1$ terms of the Taylor expansion of
$\lambda(x)$ and then considering the Mellin transform of $\lambda$
so truncated, which exits in the strip $-\bar{\alpha}-1
<\Re(s)<-\bar{\alpha}$).
\end{proof}

\begin{prop} \label{prop-tr}
Let $\tilde{f}(z)$ and $\tilde{g}(z)$ be entire functions satisfying
\eqref{dfe-DST}. Then
\[
    \tilde{f}\in\JS
    \quad\text{if and only if}\quad\tilde{g}\in\JS.
\]
\end{prop}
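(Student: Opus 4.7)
The proposition is a two-way claim. The easier direction, $\tilde{f}\in\JS \Rightarrow \tilde{g}\in\JS$, I would handle by rearranging \eqref{dfe-DST} as
\[
    \tilde{g}(z) = \tilde{f}(z) - \tilde{f}(pz) + e^{-pz}\tilde{f}(pz) - e^{-pz}\tilde{f}(qz),
\]
and showing that each of the four summands lies in $\JS$. The first is the hypothesis, the second follows from Lemma~\ref{lm-closure}(ii), and $e^{-pz}\tilde{f}(qz)$ is handled by Lemma~\ref{lm-closure}(v) with $F_1(w)=e^{-w}$, $F_2=\tilde{f}$, and the splitting $z=pz+qz$. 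The remaining summand $e^{-pz}\tilde{f}(pz)$ does not fit any single closure rule directly, but is verified by hand: on $|\arg z|\le\ve$ the factor $e^{-pz}$ decays exponentially, so the polynomial bound on $\tilde{f}(pz)$ is more than enough; on $\ve\le|\arg z|\le\pi$ one writes $e^{z}\cdot e^{-pz}\tilde{f}(pz)=e^{(q-p)z}f(pz)$ and uses $|e^{(q-p)z}|\le\max\{1,e^{(q-p)|z|\cos\ve}\}$ together with $|f(pz)|=O(e^{(1-\ve')p|z|})$ from the JS hypothesis on $\tilde{f}$; the product is then bounded by $e^{(1-\ve'')|z|}$ because $(q-p)\cos\ve+p<1$. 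An appeal to Lemma~\ref{lm-closure}(iii) finishes this direction.

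The harder direction is $\tilde{g}\in\JS \Rightarrow \tilde{f}\in\JS$. My plan is to verify (I) and (O) separately by induction over nested annular domains $\{|z|\le K/q^j\}$, in the same spirit as the argument used for $\tilde{A}_k$ earlier in the paper. For (I), fix small $\ve>0$ and restrict to $|\arg z|\le\ve$. Since $\tilde{f}$ is entire, a constant bound $|\tilde{f}(z)|\le M_0$ holds on $|z|\le K$, serving as the base case. On the annulus $K<|z|\le K/q$ (where $|pz|,|qz|\le K$ since $p\le q$), the recurrence yields
\[
    |\tilde{f}(z)|\le(1+e^{-p\Re z})|\tilde{f}(pz)|+e^{-p\Re z}|\tilde{f}(qz)|+|\tilde{g}(z)|,
\]
and choosing $K$ large enough makes $e^{-p\Re z}\le e^{-pK\cos\ve}$ arbitrarily small. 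Propagating a polynomial ansatz $|\tilde{f}(z)|\le C|z|^{\mu}(\log_+|z|)^{\nu}$ with $\mu>\alpha$ chosen so that $p^{\mu}(1+\delta)<1$ through the successive annuli closes the induction and places $\tilde{f}$ in $\JS_{\!\!\mu,\nu}$.

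For condition (O), multiplying \eqref{dfe-DST} by $e^z$ converts the recurrence into
\[
    f(z) = (e^{qz}-e^{(q-p)z})f(pz) + f(qz) + g(z),
\]
and the same annular induction is run on $\ve\le|\arg z|\le\pi$, with trivial base bound on $|z|\le K$. The coefficients are controlled by $|e^{qz}|,|e^{(q-p)z}|\le e^{q|z|\cos\ve}$ on $\ve\le|\arg z|\le\pi/2$ and $\le 1$ on $\pi/2\le|\arg z|\le\pi$. Combined with $|g(z)|=O(e^{(1-\ve')|z|})$ and the inductive $|f(pz)|\le Me^{(1-\ve'')p|z|}$, $|f(qz)|\le Me^{(1-\ve'')q|z|}$, the inductive step produces a bound with dominant exponent $q\cos\ve+(1-\ve'')p$, which is strictly less than $1-\ve''$ as long as $\ve''\le 1-\cos\ve$; the term from $g$ is absorbed when $\ve''\le\ve'$. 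The main technical hurdle will be the self-consistency of the induction: each step picks up a fixed multiplicative factor, and $K$ must be taken large enough that the exponential slack $e^{-\eta|z|}$ absorbs this accumulated constant, concretely $(3M+M_g)e^{-\eta K}\le M$ where $\eta>0$ is the gap in the exponent.
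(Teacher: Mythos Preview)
Your proposal is correct and self-contained, but the route you take for the sufficiency direction differs from the paper's.

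For the necessity direction you and the paper do the same thing: invoke the closure properties of Lemma~\ref{lm-closure} (you give slightly more detail in checking $e^{-pz}\tilde f(pz)$ by hand, which is fine).

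For the sufficiency direction the two arguments diverge. You run an induction over increasing annuli $K/q^j<|z|\le K/q^{j+1}$ for both conditions \textbf{(I)} and \textbf{(O)}, mimicking the proof of the lemma for $\tilde A_k$. The paper instead handles \textbf{(O)} by introducing the radial maximum $M(r)=\max_{\ve\le|\theta|\le\pi}|f(re^{i\theta})|$, bounding the difference $|e^{qz}-e^{(q-p)z}|$ via Pittel's inequality \eqref{ineq-diff} (rather than your cruder triangle-inequality splitting), and then showing that $\tilde M(r):=M(r)e^{-(1-\ve')r}$ satisfies a functional inequality dominated by a bounded auxiliary function, in the spirit of Lemma~\ref{lmm-fg}. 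For \textbf{(I)} the paper appeals to the real-variable transfer of Lemma~\ref{lmm-fg} (trie-recurrence comparison plus Mellin transform) and omits the extension to the sector $|\arg z|\le\ve$; your annular induction treats the complex sector directly. The paper's route buys sharper growth exponents (the precise orders of \eqref{trsfr}), though these are not needed for mere JS-admissibility; your route is more elementary in that it avoids Pittel's inequality and Mellin analysis, at the cost of a coarser polynomial exponent $\mu>\alpha$. One minor remark: your worry about ``accumulated constants'' in the \textbf{(O)} induction is unnecessary, since on every annulus one has $|z|>K$, so the same slack $e^{-\eta K}$ works at every step and the constant $M$ does not grow.
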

\begin{proof}
The necessity part follows from Lemma~\ref{lm-closure}. We prove the
sufficiency, namely, if $\tilde{g}\in\JS$, then $\tilde{f}\in\JS$.

Write throughout the proof $z=re^{i\theta}$, $r\ge0$ and $-\pi \le
\theta\le \pi$. Consider first the region when $\ve \le |\theta|\le
\pi$. By assumption, $|e^z\tilde{g}(z)|\le K e^{(1-\ve_1)r}$. Define
\[
    M(r) := \max_{\ve \le|\theta|\le \pi} |f(z)| \qquad(r\ge0).
\]
Then by the functional equation
\[
    f(z) = \bigl(e^{qz}- e^{(q-p)z}\bigr)f(pz)
    + f(qz) +e^z\tilde{g}(z),
\]
we have
\[
    M(r) \le \bigl|e^{qz}-e^{(q-p)z}\bigr|M(pr)
    + M(qr) +Ke^{(1-\ve_1)r}.
\]
By using Pittel's inequality (see \cite[Appendix]{Pittel86})
\[
    |e^z-1| \le (e^r-1)e^{-r(1-\cos\theta)/2}
    \qquad(r\ge0; |\theta|\le\pi),
\]
we have
\begin{align}
    \bigl|e^{qz}-e^{(q-p)z}\bigr|
    &= \bigl|e^{(q-p)z}\bigr| \left|e^{pz}-1\right|\nonumber\\
    &\le e^{(q-p)r\cos\theta}\left(e^{pr}-1\right)
    e^{-pr(1-\cos\theta)/2}\nonumber\\
    &= \bigl(e^{qr}-e^{(q-p)r}\bigr)
    e^{-(q-p/2)r(1-\cos\theta)}\nonumber \\
    &\le e^{-\ve_2r}\bigl(e^{qr}-e^{(q-p)r}\bigr),\label{ineq-diff}
\end{align}
for $\ve\le|\theta|\le \pi$. Let $\ve' := \min\{\ve_1,\ve_2\}$. It
follows that
\[
    M(r) \le e^{-\ve'r} \bigl(e^{qr}-e^{(q-p)r}\bigr)M(pr)
    + M(qr) +Ke^{(1-\ve')r}.
\]
Let $\tilde{M}(r) := M(r) e^{-(1-\ve')r}$. Then
\[
    \tilde{M}(r) \le e^{-\ve'pr}\bigl(1-e^{-pr}\bigr)
    \tilde{M}(pr) + e^{-(1-\ve')pr}\tilde{M}(qr) + K.
\]
By the same bounding argument used in Lemma~\ref{lmm-fg}, we see
that $\tilde{M}(r) = O(1)$, and thus $M(r) = O(e^{(1-\ve')r})$.
[Technically, we define a function, say $\phi(r)$, satisfying the
functional equation
\[
    \phi(r) = e^{-\ve'pr}\bigl(1-e^{-pr}\bigr)
    \phi(pr) + e^{-(1-\ve')r} \phi(qr) + K,
\]
prove $\phi(r)=O(1)$ and then $M(r) \le \phi(r)$.]

We now consider the sector $|\theta|\le\ve$. Since $\tilde{g}(z)
= O\left(|z|^{\alpha} (\log_+|z|)^\beta\right)$ in this sector,
we can then show that $\tilde{f}$ grows at most polynomially
and is thus JS-admissible, details being omitted here.
\end{proof}

\end{document}